\newtheorem{Def}{Definition}[section]
\newtheorem{Theo}[Def]{Theorem}
\newtheorem{Pro}[Def]{Proposition}
\newtheorem{Cor}[Def]{Corollary}
\newtheorem{Rem}[Def]{Remark}
\newtheorem{Lem}[Def]{Lemma}
\begin{document}
\begin{spacing}{1.5}
\noindent\rule[0.25\baselineskip]{\textwidth}{1pt}

\noindent \textbf{\Large{The relation between Parabolic Hecke modules and $W$-graph ideal modules in Kazhdan-Lusztig theory}}\newline\newline
\noindent \textbf{Qi Wang} \newline
\end{spacing}
\begin{spacing}{1.2}
\noindent\textbf{Abstract }
In 2011, Howlett and Nguyen \cite{r1} introduced the concept of a $W$-graph ideal $E_J$ in $\left ( W,\leqslant _{L} \right )$ with respect to $J$ (a subset of $S$), where $\leqslant _{L}$ is the left weak order on $W$. They proved that one can construct a $W$-graph from a given $W$-graph ideal by constructing a Hecke module structure on $E_J$, where the $W$-graph was introduced by Kazhdan and Lusztig in \cite{d1}.

In this paper, we give the relation between Hecke modules on $E_J$ and general Hecke algebras by considering the relation between Hecke modules on $E_J$ and parabolic Hecke modules. And inspired by Lusztig \cite{g3}, we show that the parabolic Hecke module is isomorphic to a left ideal of the Hecke algebra. Lastly, we give the relation between $R$-polynomials on $E_J$ and parabolic $R$-polynomials as an application of the main results.\newline

\noindent\textbf{Keywords:} Coxeter group, Hecke algebra, Kazhdan-Lusztig theory, $W$-graph, $W$-graph ideal,  Parabolic Hecke modules.\newline\newline\newline

\section*{Introduction}
Let $(W,S)$ be a Coxeter system and $\mathscr{H}\left ( W,S \right )$ be the corresponding Hecke algebra. There is a representation of $\mathscr{H}\left ( W,S \right )$ called $W$-graph that introduced by Kazhdan and Lusztig in \cite{d1}. A $W$-graph provides a method for constructing a matrix representation of $\mathscr{H}\left ( W,S \right )$, the element in this matrix is the so called Kazhdan-Lusztig polynomials and the degree of the representation being the number of vertices of the $W$-graph.

In \cite{v1} Deodhar used parabolic Kazhdan-Lusztig polynomials relative to a standard parabolic subgroup $W_J$ ($J$ is a subset of $S$) to give $W$-graph structures on $D_J$, where $D_J$ is the set of minimal coset representatives of $W_J$. In \cite{h1} Tagawa introduced the weighted case for Deodhar's constructions.

In \cite{r1} Howlett and Nguyen introduced the concept of a $W$-graph ideal in $(W,\leqslant_L)$ with respect to a subset $J$ (of $S$), where $\leqslant_L$ is the left weak Bruhat order on $W$. Then they showed that a $W$-graph can be constructed from a given $W$-graph ideal. In \cite{y1} and \cite{y2} Yin introduced the weighted case for Howlett's constructions.

In this paper, we give the relations between Howlett's constructions and constructions of previous scholars. In order to generality, we give the weighted case.

The paper is organised as follows. In Section 1, we present some basic concepts and recall the definitions of parabolic Hecke modules and $W$-graph ideals with some modifications. In Section 2, we construct the relation between parabolic Hecke modules on $D_J$ and $W$-graph ideal modules in order to give the relation between $W$-graph ideal modules and general Hecke algebras. In Section 3, we show that the parabolic Hecke module is isomorphic to a left ideal of the Hecke algebra. In Section 4, we give the relations between parabolic $R$-polynomials and $R$-polynomials on $E_J$.

\section{Preliminaries}

In this section we follow the conventions in Yin \cite{y1}. Let $\Gamma$ be a totally ordered Abel group with the zero element $\bm{0}$ which will be denoted additively, the order on $\Gamma$ will be denoted by $\leqslant $.

Let $\mathbb{Z}[\Gamma ]$ be a free $\mathbb{Z}$-module with basis $\left \{ q^{\gamma }\mid \gamma \in \Gamma  \right \}$, where $q$ is an indeterminant. A map $L:W\rightarrow \Gamma $ is called a weight function of $W$ into $\Gamma$ if $L$ satisfies that
\begin{center}
$L(w)=L\left (s_{1} \right )+L\left (s_{2}  \right )+\cdots +L\left (s_{k}  \right )$,
\end{center}
for any reduced expression $w=s_{1}s_{2}\cdots s_{k}$. We assume throughout that $L(s)\geqslant \bm{0}$ for all $s\in S$.

Let $\mathscr{H}=\mathscr{H}\left ( W,S,L \right )$ be the weighted Hecke algebra corresponding to $(W,S)$ with parameters $\left \{ q_s^{1/2} \mid s\in S\right \}$, where $q_s=q^{L(s)}$. It has a basis set $\left \{ T_{w}\mid w\in W \right \}$ as a free $\mathbb{Z}[\Gamma ]$-module. The multiplication is given by the rules
  \begin{center}
    $T_{s}T_{w}=
    \left\{\begin{aligned}
      &T_{sw}                            &\text{if}\ \ell(sw)>\ell(w)\\
      &q_{s}T_{sw}+(q_{s}-1)T_{w}  &\text{if}\ \ell(sw)<\ell(w).
    \end{aligned}\right.$
  \end{center}
Let $\bar{\ \ }: \mathscr{H}\rightarrow \mathscr{H}$ be the unique ring involution such that $\overline{q_{w}T_{w}}=q_w^{-1}T_{w^{-1}}^{-1}$ for any $w\in W$.

\subsection{The parabolic Hecke modules}
Let $W_J=\left \langle J \right \rangle$ be the parabolic subgroup of $W$, denote
\begin{center}
$D_J=\left \{ w\in W\mid \ell (ws)> \ell (w)\, \text{for all} \ s\in J\right \}$,
\end{center}
be the set of minimal coset representatives of $W_J$. It has been proved that there are two parabolic Hecke modules $M^{J}$ and $\widetilde{M}^{J}$ on $D_J$ by Deodhar \cite{v1}, and the duality of this two modules was introduced in Deodhar \cite{v2}. The weight case was introduced by Tagawa in \cite{h1}. We recall these constructions as follows.

For each $\sigma\in D_{J}$ define the following subsets of $S$:
   \begin{center}
   $\begin{aligned}
  D_{J, \sigma}^{-}&=\left \{s\in S \mid \ell(s\sigma)<\ell (\sigma)\right \},\\
  D_{J, \sigma}^{+}&=\left \{s\in S \mid \ell(s\sigma)>\ell (\sigma)\ \text{and}\ s\sigma\in D_{J} \right \},\\
  D_{J, \sigma}^{0}&=\left \{s\in S \mid \ell(s\sigma)>\ell (\sigma)\ \text{and}\ s\sigma\notin D_{J} \right \}.
    \end{aligned}$
  \end{center}

Let $u_s$ be a root of the equation $x^2=q_s+(q_s-1)x$. i.e., $u_s=-1$ or $u_s=q_s$. There exists a $\mathscr{H}$-module $M^{J,u_s}$ which is free as a $\mathbb{Z}[\Gamma ]$-module with a basis $\left \{ m_{\sigma}^{J,u_s}\mid \sigma\in D_J \right \}$ such that the actions of $\mathscr{H}$ is given by
\begin{center}
    $T_{s}\cdot m_{\sigma}^{J,u_s}=
    \left\{\begin{aligned}
      &q_sm_{s\sigma}^{J,u_s}+(q-1)m_{\sigma}^{J,u_s}     &\text{Èô}\ s\in D_{J, \sigma}^{-},\\
      &m_{s\sigma}^{J,u_s}                   &\text{Èô}\ s\in D_{J, \sigma}^{+},\\
      &u_sm_{\sigma}^{J,u_s}                   &\text{Èô}\ s\in D_{J, \sigma}^{0}.
    \end{aligned}\right.$
\end{center}
The weighted parabolic Hecke module $M^{J,u_s}$ has an involution $\bar{ \ }$ which is compatible with the involution on $\mathscr{H}$, i.e., for $h\in\mathscr{H}$, $m \in M^{J,u_s}$, $\overline{h\cdot m}=\overline{h}\cdot \overline{m}$.

In this paper, the Hecke module $M^{J,-1}$ is denoted by $M^{J}$ and the Hecke module $M^{J,q_s}$ is denoted by $\widetilde{M}^{J}$.

There exists an algebra map $\Phi : \mathscr{H}\rightarrow \mathscr{H} $ given by
\begin{center}
$\Phi (q_s^{1/2})=-q_s^{1/2}$ and $\Phi (T_w)=\epsilon _{w}q_{w}\overline{T_w}$,
\end{center}
where $\bar{\ }$ is the standard involution in $\mathscr{H}$ and $\epsilon _{w}=(-1)^{\ell(w)}$. Furthermore, $\Phi^2=Id$ and $\Phi$ commutes with $\bar{\ }$.

There is a map $\varphi _J$ (respectively $\widetilde{\varphi} _J$) from $\mathscr{H}$ to $M^{J}$ (respectively $\widetilde{M}^{J}$) such that for any $w = \sigma \cdot w_J$ with $\sigma\in D_J$ and $w_J \in W_J $,
\begin{center}
$\varphi _J(T_w) = \epsilon _{w_J} m_\sigma^J$, (respectively $\widetilde{\varphi} _J(T_w) = q_{w_J}\cdot \widetilde{m}_\sigma^J$).
\end{center}
Moreover, these maps commute with the standard involution $\bar{\ }$.

\begin{Pro}[\cite{v2}, Theorem 2.1]There is a unique map $\theta_J (m_\sigma^J)=\epsilon_\sigma q_\sigma\overline{\widetilde{m}_\sigma^J}$ from $M^J$ to $\widetilde{M}^{J}$ such that
\begin{center}
$\theta_J (m_e^J)=\widetilde{m}_e^J$ and $\theta_J (h\cdot m_\sigma^J)=\Phi (h)\cdot \theta_J (m_\sigma^J)$,
\end{center}
for any $h\in \mathscr{H} $ and $m_\sigma^J\in M^J$. Furthermore, it has the following properties :

$\left ( 1 \right )$ The following diagram is commutative:
\begin{center}
$\xymatrix{
\mathscr{H} \ar[rr]^{\varphi _J}\ar[d]_{\Phi } &   & M^J \ar[d]^{\theta_J } \\
\mathscr{H} \ar[rr]^{\widetilde{\varphi} _J}   &   & \widetilde{M}^{J} }$
\end{center}

$\left ( 2 \right )$ $\theta_J $ commutes with the involutions $\bar{\ }$ on $M^J$ and $\widetilde{M}^{J}$.

$\left ( 3 \right )$ $\theta_J $ is one-to-one onto and the inverse $\eta_J $ (of $\theta_J $) satisfies properties of $\theta_J $.
\end{Pro}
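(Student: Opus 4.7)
I would adopt the stated formula as the \emph{definition} of $\theta_J$ on the $\mathbb{Z}[\Gamma]$-basis $\{m_\sigma^J : \sigma \in D_J\}$, extend additively, and then verify every asserted property by direct calculation. Uniqueness is automatic once one observes that $M^J$ is cyclic, generated as a left $\mathscr{H}$-module by $m_e^J$: for any reduced expression $\sigma = s_1 \cdots s_k$ of an element $\sigma \in D_J$, each partial product $s_1 \cdots s_i$ again lies in $D_J$ (standard for minimal coset representatives), so iterating the $D_{J,\cdot}^+$-case of the action yields $m_\sigma^J = T_\sigma \cdot m_e^J$. Any map satisfying the two stated conditions is therefore forced to take the value $\Phi(T_\sigma) \cdot \widetilde{m}_e^J$ on $m_\sigma^J$; unwinding $\Phi(T_\sigma) = \epsilon_\sigma q_\sigma \overline{T_\sigma}$ together with bar-compatibility of the $\widetilde{M}^J$-action recovers exactly the prescribed formula $\epsilon_\sigma q_\sigma \overline{\widetilde{m}_\sigma^J}$, which simultaneously proves consistency of the definition and uniqueness.

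\textbf{Core verification.} The heart of the matter is the intertwining identity $\theta_J(T_s m_\sigma^J) = \Phi(T_s) \theta_J(m_\sigma^J)$ for $s \in S$. Rewriting $\Phi(T_s) = -q_s \overline{T_s} = (q_s-1) - T_s$, one expands both sides using the defining module rules and splits into the three cases $s \in D_{J,\sigma}^-,\, D_{J,\sigma}^+,\, D_{J,\sigma}^0$. The two $\pm$-cases are related by swapping the roles of $\sigma$ and $s\sigma$; both reduce to the identity $\overline{T_s \widetilde{m}^J} = \overline{T_s} \cdot \overline{\widetilde{m}^J}$ combined with $q_{s\sigma} = q_s q_\sigma$ and $\epsilon_{s\sigma} = -\epsilon_\sigma$ (when $\ell(s\sigma) > \ell(\sigma)$). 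The delicate case $D_{J,\sigma}^0$ must reconcile the distinct eigenvalues $-1$ in $M^J$ and $q_s$ in $\widetilde{M}^J$: barring $T_s \widetilde{m}_\sigma^J = q_s \widetilde{m}_\sigma^J$ and using $\overline{T_s} = T_s^{-1}$ yields $T_s \overline{\widetilde{m}_\sigma^J} = q_s \overline{\widetilde{m}_\sigma^J}$, whereupon $\Phi(T_s) \cdot \theta_J(m_\sigma^J) = \epsilon_\sigma q_\sigma \bigl((q_s-1) - q_s\bigr) \overline{\widetilde{m}_\sigma^J} = -\epsilon_\sigma q_\sigma \overline{\widetilde{m}_\sigma^J} = \theta_J(-m_\sigma^J)$, matching the $M^J$-rule.

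\textbf{Derivation of (1)--(3).} Property (1) follows by evaluating both $\theta_J \circ \varphi_J$ and $\widetilde{\varphi}_J \circ \Phi$ on $T_w = T_\sigma T_{w_J}$ with $\sigma \in D_J$, $w_J \in W_J$, and invoking the established formula $\theta_J(m_\sigma^J) = \Phi(T_\sigma)\widetilde{m}_e^J$ together with multiplicativity of $\Phi$ and the defining formulas for $\varphi_J, \widetilde{\varphi}_J$. For (2), set $\rho := \bar{\ } \circ \theta_J \circ \bar{\ }$; since $\Phi$ commutes with $\bar{\ }$ and both module involutions are $\mathscr{H}$-compatible, $\rho$ satisfies the two characterizing conditions of $\theta_J$, so $\rho = \theta_J$ by the uniqueness from Step~1. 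For (3), define $\eta_J(\widetilde{m}_\sigma^J) := \epsilon_\sigma q_\sigma \overline{m_\sigma^J}$ symmetrically; the same argument shows $\eta_J$ is the unique map satisfying $\eta_J(\widetilde{m}_e^J) = m_e^J$ and $\eta_J(h \cdot \widetilde{m}) = \Phi(h) \cdot \eta_J(\widetilde{m})$. Then $\eta_J \circ \theta_J$ fixes $m_e^J$ and intertwines $\mathscr{H}$ through $\Phi^2 = \mathrm{Id}$, so by uniqueness of such a map it must equal $\mathrm{Id}_{M^J}$; symmetrically $\theta_J \circ \eta_J = \mathrm{Id}_{\widetilde{M}^J}$.

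\textbf{Main obstacle.} The principal technical hurdle is the case-by-case verification of the intertwining identity, above all the case $s \in D_{J,\sigma}^0$, where the distinct eigenvalues of $T_s$ on $m_\sigma^J$ and $\widetilde{m}_\sigma^J$ must be reconciled through the combined action of $\Phi$ and the bar involution; all signs and powers of $q_s$ have to line up exactly. A secondary but essential auxiliary point is the cyclicity of $M^J$ under $\mathscr{H}$, which rests on the standard lemma that reduced initial segments of an element of $D_J$ remain in $D_J$; I would cite this rather than reprove it.
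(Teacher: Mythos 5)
Note first that the paper itself offers no proof of this Proposition: it is quoted verbatim from Deodhar \cite{v2} (Theorem 2.1), with the weighted version in Tagawa \cite{h1}, so there is no in-paper argument to compare against. Your reconstruction is correct and is essentially the standard (Deodhar-style) argument: define $\theta_J$ on the $\mathbb{Z}[\Gamma]$-basis by the closed formula, verify the intertwining relation on the generators $T_s$ in the three cases $D_{J,\sigma}^{-}$, $D_{J,\sigma}^{+}$, $D_{J,\sigma}^{0}$ using $\Phi(T_s)=(q_s-1)-T_s$ and bar-compatibility of the module involutions, and then deduce uniqueness and properties (1)--(3) formally from cyclicity and $\Phi^2=\mathrm{Id}$; your treatment of the delicate $D_{J,\sigma}^{0}$ case, where the eigenvalues $-1$ and $q_s$ must be reconciled, is exactly right. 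One small correction: with the left action used here, the cyclicity $m_\sigma^J=T_\sigma\cdot m_e^J$ rests on the fact that the \emph{right} segments $s_i\cdots s_k$ of a reduced word $\sigma=s_1\cdots s_k\in D_J$ again lie in $D_J$ (if $\ell(vs)<\ell(v)$ for some $s\in J$ and $\sigma=uv$ reduced, then $\ell(\sigma s)<\ell(\sigma)$), not the initial segments $s_1\cdots s_i$ as you wrote; for example in type $A_2$ with $J=\{s_2\}$ one has $s_2s_1\in D_J$ but $s_2\notin D_J$. The induction you need goes $T_{s_1}(T_{s_2\cdots s_k}m_e^J)=T_{s_1}m_{s_2\cdots s_k}^J=m_\sigma^J$ via the $D_{J,\cdot}^{+}$ case, so the argument is unaffected once the indexing is fixed.
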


\subsection{The Hecke modules with respect to $W$-graph ideal}

In this subsection, we recall some constructions of $W$-graph ideal with some modifications. The equal parameter case was introduced by Howlett \cite{r1} and the weighted case was introduced by Yin \cite{y1}.

\begin{Def}[\cite{r1}, Definition 2.2]For any $u,w\in W$, if there are some $s_i\in S$ such that
\begin{center}
$w=s_ks_{k-1}\cdots s_1u$ \ and \ $\ell(s_is_{i-1}\cdots s_1u)=\ell(u)+i,$
\end{center}
for all $ 0\leqslant i\leqslant k$, then we denote $u\leqslant _Lw$ and call $\leqslant _L$ the left weak order on $W$. We say that $u$ is a \emph{\textbf{suffix}} of $w$.
\end{Def}

\begin{Def}[\cite{y1}, Definition 1.1, modified] A $W$-graph for $\mathscr{H}$ consists of the following data:

$\left ( 1 \right )$ A vertex set $\Lambda $ together with a map $I$ which assigns a subset $I(x)\subseteq S$ to each $x\in\Lambda $;

$\left ( 2 \right )$ For each $s\in S$ with $L(s)=\bm{0}$, there is an edge: $x\rightarrow sx$;

$\left ( 3 \right )$ For each $s\in S$ with $L(s)>\bm{0}$, there is a collection of edges such that
\begin{center}
$\left \{ \mu _{x,y}^{s}\in \mathbb{Z}[\Gamma  ]\mid  x,y\in \Lambda , s\in I(x), s\notin I(y) \right \}$ and $\overline{\mu _{x,y}^{s}}=\mu _{x,y}^{s}$.
\end{center}
Furthermore, let $[\Lambda ]_{\mathbb{Z}[\Gamma  ]}$ be a free $\mathbb{Z}[\Gamma  ]$-module with basis $\left \{ b_{y}\mid y\in \Lambda  \right \}$, then we require that the assignment $T_{s}\rightarrow \tau _{s}$ defines a representation of $\mathscr{H}$ by following multiplication.
  \begin{center}
    $\tau_{s}(b_{y})=
     \left\{\begin{aligned}
     &{b_{sy}}                                                                    & \text{if}\ L(s)&=\bm{0};\\
     &{-b_{y}}                                                                    & \text{if}\ L(s)&>\bm{0},s\in I(y);\\
     &{q_{s}b_{y}+q_{s}^{1/2}\sum\limits_{x\in \Lambda ,s\in I(x)}^{ }\mu _{x,y}^{s}b_{x}} & \text{if}\ L(s)&>\bm{0},s\notin I(y).
     \end{aligned}\right.$
  \end{center}
\end{Def}

\begin{Def} [\cite{r1}, Section 5]For any $w\in W$, let $E$ be a subset of $W$ such that $w$ is a suffix of an element of $E$ that is itself in $E$, then we call $E$ is an \text{\bf{ideal}} in the poset $(W,\leqslant _L)$.
\end{Def}

\begin{Def}[\cite{r1}, Definition 2.5] If $E\subseteq W$, let
  \begin{center}
$Pos\left (E\right )=\left \{ s\in S\mid \ell(xs)> \ell (x)\ \text{for all}\ x\in E \right \} $.
  \end{center}
\end{Def}
If $E$ is an ideal of $W$, then $Pos(E) =S/E=\left \{ s\in S|s\notin E \right \}$ and $E\subseteq D_J$, where $J$ is an arbitrary subset of $Pos(E)$. We shall denote by $E_{J}$ for the ideal $E$ with reference to $J$ and the reason is clear from the following constructions. For each $y\in E_{J}$ we define the following subsets of $S$:
   \begin{center}
   $\begin{aligned}
  SD\left ( y \right )&=\left \{s\in S \mid \ell(sy)<\ell (y)\right \},\\
  SA\left ( y \right )&=\left \{s\in S \mid \ell(sy)>\ell (y)\ \text{and}\ sy\in E_{J} \right \},\\
  WD\left ( y \right )&=\left \{s\in S \mid \ell(sy)>\ell (y)\ \text{and}\ sy\notin D_{J} \right \},\\
  WA\left ( y \right )&=\left \{s\in S \mid \ell(sy)>\ell (y)\ \text{and}\ sy\in D_{J}/E_{J} \right \}.
    \end{aligned}$
  \end{center}

\begin{Pro}[\cite{y1}, Definition 2.4, modified] Let $E_{J}$ be an ideal of $W$, it is said to be a $W$-graph ideal with respect to $J$ and $L$ if the following hypotheses are satisfied.

$\left ( 1 \right )$ There exists a $\mathbb{Z}[\Gamma ]$-free $\mathscr{H}$-module $M(E_J,L)$ possessing a basis $\left \{ \Gamma _{y} \mid y\in E_{J}\right \}$ such that
  \begin{center}
   $T_{s}\Gamma _{y}=
      \left\{\begin{aligned}
      &q_s\Gamma _{sy}+\left ( q_s-1 \right )\Gamma _{y}            & \text{if}\ s&\in SD(y),\\
      &\Gamma _{sy}                                                     & \text{if}\ s&\in SA(y),\\
      &-\Gamma _{y}                                                     & \text{if}\ s&\in WD(y),\\
      &q_s\Gamma _{y}-\sum_{z<sy,z\in E_J}^{}r_{z,y}^{s}\Gamma _{z}  & \text{if}\ s&\in WA(y),
      \end{aligned}\right.$
  \end{center}
for some polynomials $r_{z,y}^{s}\in q_s\mathbb{Z}[\Gamma ]$.

$\left ( 2 \right )$ The $\mathscr{H}$-module $M(E_J,L)$ admits a $\mathbb{Z}[\Gamma ]$-semilinear involution $\bar{\ } $ satisfying
\begin{center}
$\overline{\Gamma _{e}}=\Gamma _{e} $ and $\overline{T_{w}\Gamma _{y}}=\overline{T_{w}}\ \overline{\Gamma _{y}}$,
\end{center}
for all $T_{w}\in \mathscr{H}$ and $\Gamma _{y}\in M(E_J,L)$.
\end{Pro}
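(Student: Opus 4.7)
The statement as displayed is definitional in character: it introduces the phrase ``$W$-graph ideal with respect to $J$ and $L$'' by declaring that an ideal $E_J$ in $(W,\leqslant_L)$ receives this name precisely when the data in (1) and (2) exist with the listed properties. There is no logical gap between hypothesis and conclusion to close --- the hypotheses are the content. Accordingly, there is nothing to prove in the usual sense, and my ``proposal'' is procedural: record the definition, note what follows formally from it, and flag what is explicitly not being asserted.

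The only formal verifications worth attaching to the statement concern well-posedness, and I would handle them in two short steps. First, I would check that the four cases in the multiplication rule in (1) cover $s\in S$ disjointly and exhaustively for each $y\in E_J$; this is immediate from the trichotomy ``$\ell(sy)<\ell(y)$, or $\ell(sy)>\ell(y)$ with $sy\in E_J$, or $\ell(sy)>\ell(y)$ with $sy\in D_J\setminus E_J$, or $\ell(sy)>\ell(y)$ with $sy\notin D_J$,'' giving the partition $S=SD(y)\sqcup SA(y)\sqcup WA(y)\sqcup WD(y)$ exactly because $E_J\subseteq D_J$. Second, I would remark that clause (2) together with the normalisation $\overline{\Gamma_e}=\Gamma_e$ and the module action from (1) uniquely determines $\overline{\Gamma_y}$ for every $y\in E_J$: since $E_J$ is an ideal, $e\leqslant_L y$ for all $y$, so $\Gamma_y$ is reachable from $\Gamma_e$ by a sequence of $SA$-steps $\Gamma_y=T_{s_k}\cdots T_{s_1}\Gamma_e$, and semilinear compatibility forces $\overline{\Gamma_y}=\overline{T_{s_k}\cdots T_{s_1}}\;\Gamma_e$. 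Neither observation requires any real work; they merely confirm that the listed data, when postulated, is internally consistent.

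The main ``obstacle'' is precisely the temptation to treat the statement as an existence assertion and try to construct $M(E_J,L)$, the polynomials $r^s_{z,y}\in q_s\mathbb{Z}[\Gamma]$, or the involution from scratch. That would be a distinct existence theorem --- verifying, for particular ideals such as $W$ itself or parabolic quotients, that the definition is met --- and it lies outside the content of the statement as worded. Similarly, the constraint $r^s_{z,y}\in q_s\mathbb{Z}[\Gamma]$ is a structural hypothesis distinguishing $W$-graph ideals from arbitrary Hecke-action-carrying ideals, not something to be derived; it is what will later permit the construction of a $W$-graph in the sense of Definition~1.2. My proposal therefore stops at this definitional acknowledgement, leaving the serious existence work to the sections and references where specific $E_J$ are exhibited.
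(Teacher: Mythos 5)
Your reading is correct: despite being typeset as a Proposition, this statement is a definition imported from Yin (\cite{y1}, Definition 2.4), and the paper accordingly supplies no proof, so your decision to restrict yourself to well-posedness checks (the partition $S=SD(y)\sqcup SA(y)\sqcup WA(y)\sqcup WD(y)$, which uses $E_J\subseteq D_J$, and the forced determination of $\overline{\Gamma_y}=\overline{T_y}\,\Gamma_e$ from the ideal property) matches the paper's treatment exactly. Both observations you record are accurate and are implicitly relied upon later (e.g.\ in Corollary 2.4 and Section 4), so nothing is missing.
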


There is a similarly definition for Hecke module $\widetilde{M}(E_J,L)$ (see \cite{y2}) and we call both of them the weighted $W$-graph ideal modules. we also have duality of them as follows.

\begin{Pro}[\cite{y2}, Theorem 3.1, modified] There is a unique map $\delta(\Gamma_y)=\epsilon_y q_y\overline{\widetilde{\Gamma}_y} \ $ from $M(E_J, L)$ to $\widetilde{M}(E_J, L)$ such that
 \begin{center}
 $\delta \left ( \Gamma _{e} \right )=\widetilde{\Gamma _{e}}$ and $\delta  \left ( T_{w}\Gamma _{y} \right )=\Phi \left ( T_{w} \right )\delta  \left (\Gamma _{y} \right )$,
 \end{center}
for all $T_{w}\in \mathscr{H}$, $\Gamma _{y}\in M(E_J, L)$. It has the following properties:

$\left ( 1 \right )$ $\delta $ commutes with the involution on $M(E_J, L)$ and $\widetilde{M}(E_J, L)$.

$\left ( 2 \right )$ $\delta  $ is a bijection and the inverse $\rho  $ of $\delta  $ satisfies properties of $\eta $.
  \end{Pro}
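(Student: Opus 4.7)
The plan is to mirror the proof of Proposition~1.1 (the duality between the parabolic Hecke modules $M^{J}$ and $\widetilde{M}^{J}$) in the $W$-graph ideal setting. I would take the prescribed formula $\delta(\Gamma_y) = \epsilon_y q_y \overline{\widetilde{\Gamma}_y}$ as the definition on the basis $\{\Gamma_y : y \in E_J\}$ and extend $\Phi$-semilinearly to a well-defined additive map $M(E_J, L) \to \widetilde{M}(E_J, L)$. The substantive content of the proposition is then to verify the equivariance $\delta(T_w\Gamma_y) = \Phi(T_w)\delta(\Gamma_y)$; since the $T_s$ ($s \in S$) generate $\mathscr{H}$ and $\Phi$ is an algebra map, this reduces to the case $w = s$.

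For $w = s$ I would split into the four cases $s \in SD(y)$, $SA(y)$, $WD(y)$, $WA(y)$ coming from the defining action of $M(E_J, L)$. In each case the left-hand side is obtained by expanding $T_s\Gamma_y$ via the prescribed action and applying $\delta$, while the right-hand side uses $\Phi(T_s) = -q_s\overline{T_s}$ together with the action of $\overline{T_s}$ on $\widetilde{M}(E_J, L)$. The $SD$, $SA$ and $WD$ cases reduce to sign- and $q$-bookkeeping that closely parallels the computations in the proof of Proposition~1.1. The new and delicate case is $WA$, where the $M$-action introduces the sum $-\sum_{z<sy}r_{z,y}^s\Gamma_z$ over strictly lower basis elements: matching the two sides requires careful tracking of the $\epsilon$-signs, the ratios $q_y/q_z$ and the $\bar{\ }$-behaviour of the $r$-polynomials, and will force a compatibility identity between the $r$'s defining $M(E_J, L)$ and their analogues in $\widetilde{M}(E_J, L)$.

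Uniqueness follows by induction on $\ell(y)$: for $y = e$ the value is prescribed, while for $y \neq e$ any left descent $s$ of $y$ satisfies $sy \in E_J$ and $s \in SA(sy)$, giving $\Gamma_y = T_s\Gamma_{sy}$, after which the equivariance pins down $\delta(\Gamma_y)$ inductively. Property~(1) is obtained by applying $\bar{\ }$ to the defining formula and using $\Phi\circ\bar{\ } = \bar{\ }\circ\Phi$ together with the involutivity of $\bar{\ }$. For property~(2), I would define $\rho$ by the symmetric formula $\rho(\widetilde{\Gamma}_y) = \epsilon_y q_y\overline{\Gamma_y}$, and then $\rho\circ\delta$ acting on a basis element collapses via $\Phi$-semilinearity, the bar-commutation of~(1), and the identity $\overline{\epsilon_y q_y} = \epsilon_y q_y^{-1}$, to the identity; a symmetric computation handles $\delta\circ\rho$. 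The whole difficulty is concentrated in the $WA$-case of the equivariance check, since it is the only step whose verification does not formally imitate the parabolic-case argument.
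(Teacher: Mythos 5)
The paper itself contains no proof of this proposition: it is imported verbatim (with notation adjusted) from Yin, \emph{W-graph Ideals and Duality}, Theorem 3.1, so there is no in-paper argument to measure yours against. Judged on its own terms, your skeleton is the right one and matches the standard Deodhar-style duality template: define $\delta$ on the basis by the stated formula and extend $\Phi$-semilinearly, reduce equivariance to the generators $T_s$, split into the $SD$, $SA$, $WD$, $WA$ cases, prove uniqueness by induction on $\ell(y)$ using $\Gamma_y=T_s\Gamma_{sy}$ for a left descent $s$ of $y$ (legitimate, since $E_J$ is an ideal under $\leqslant_L$, so $sy\in E_J$ and $s\in SA(sy)$), deduce (1) from $\Gamma_y=T_y\Gamma_e$ together with the commutation of $\Phi$ with $\bar{\ }$, and get (2) from the symmetric formula for $\rho$ plus $\overline{\epsilon_yq_y}=\epsilon_yq_y^{-1}$ and $\Phi(q_y)=q_y$.

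The one genuine gap sits exactly where you locate the difficulty and then stop: the $WA$ case. Observing that matching the two sides ``will force a compatibility identity between the $r$'s'' is not a proof that such an identity holds --- it is the entire nontrivial content of the theorem. The module $\widetilde{M}(E_J,L)$ comes equipped with its own structure constants $\widetilde{r}^{\,s}_{z,y}$, a priori unrelated to the $r^{s}_{z,y}$ of $M(E_J,L)$, and the $WA$ computation only closes if a precise relation (a sign and $q$-power twist of $\overline{r^{s}_{z,y}}$) between the two families is first established. One must either prove that relation separately (as Yin does), or define $\widetilde{M}(E_J,L)$ as the transport of structure along $\delta$ and then verify that the transported module satisfies the axioms of the analogue of Proposition 1.5, including the bar-involution in its part (2). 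Until one of these is done, existence of $\delta$ is not established; everything else in your sketch is routine bookkeeping.
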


\section{The relation between $\mathscr{H}$ and $M(E_J,L)$}

Let $E_J$ be a $W$-graph ideal. For brevity, we denote
\begin{center}
$K=Pos(E_J)$ and $\hat{J}=K/J$.
\end{center}
we define
\begin{center}
$D_K=\left \{ w\in W\mid \ell (ws)> \ell (w)\, \text{for all} \ s\in K\right \}$
\end{center}
is the set of minimal coset representative of $W_K$ and $F_J=W_{\hat{J}}$ is the subgroup of $W$ generated by the set $\hat{J}$. It is soon to prove that $E_J \subseteq D_K\subseteq D_J$. Particularly, if $\hat{J}=\o $, then $F_J=\o $ and $D_K=D_J$. We consider this case first.

\subsection{The mapping from $M^{K}$ to $M(E_J,L)$ }

In this subsection, we consider the case of $J=K$ and we denote by $D_K$ for $D_J$. There are two Hecke modules on $D_K$ as well and we denoted by $M^{K}$ and $\widetilde{M}^{K}$. All properties of these modules are similar to $M^{J}$ and $\widetilde{M}^{J}$, we do not give details.

We classify the elements in $D_K$ such that we find the relation between $D_K$ and $E_J$. With the above notations, define
\begin{center}
$\begin{aligned}
&D_K^1=\left \{ \alpha\in D_K \mid \exists\  y\in E_J,\ y\leqslant _L\alpha\right \},\\
&D_K^2=\left \{ \alpha\in D_K \mid \nexists\  y\in E_J,\ y\leqslant _L\alpha\right \},\\
&\overline{E_J}=\left \{ x\in W \mid \alpha=x\cdot y\in D_K^1,\ y\in E_J\right \}.
\end{aligned}$
\end{center}
It is obvious that $E_J\subseteq D_K^1$ and $D_K=D_K^1\cup D_K^2$.

\begin{Theo} For any $\alpha \in D_K^1$, there are unique $x\in \overline{E_J}$ and $y_{max}\in E_J$ such that

$\left ( 1 \right )$ For all $ y\leqslant _L\alpha$, $y\leqslant _L y_{max}$.

$\left ( 2 \right )$ $\alpha = x \cdot y_{max}$ and $\ell(\alpha ) =\ell(x)+\ell(y_{max})$.
\end{Theo}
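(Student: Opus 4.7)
The whole theorem reduces to showing that the set \(A := \{y \in E_J : y \leqslant_L \alpha\}\) has a unique \(\leqslant_L\)-maximum \(y_{max}\). Non-emptiness of \(A\) comes from the definition of \(D_K^1\), and its finiteness from \(A\) being contained in the set of suffixes of \(\alpha\) (which is finite). Given such a \(y_{max}\), put \(x := \alpha y_{max}^{-1}\); the identity \(\ell(\alpha) = \ell(x) + \ell(y_{max})\) is then exactly the statement \(y_{max} \leqslant_L \alpha\), and \(x \in \overline{E_J}\) is immediate from the very definition of \(\overline{E_J}\), with uniqueness of \(x\) inherited from that of \(y_{max}\).

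For existence I plan to induct on \(\ell(\alpha)\). The cases \(\alpha = e\) and \(\alpha \in E_J\) are handled by \(y_{max} = \alpha\). For \(\alpha \notin E_J\), pick any left descent \(s\) of \(\alpha\) and verify two preparatory facts. First, \(s\alpha \in D_K\): for \(t \in K\) one has \(\ell(s\alpha\, t) \geqslant \ell(\alpha\, t) - 1 > \ell(s\alpha)\), since \(\alpha \in D_K\) forces \(\ell(\alpha\, t) > \ell(\alpha) = \ell(s\alpha) + 1\). Second, \(s\alpha \in D_K^1\): applying strong exchange to a factorisation \(\alpha = x' y\) with \(y \in A\), the removed letter either falls in the \(x'\)-part (so that \(y\) itself remains a suffix of \(s\alpha\)) or inside \(y\) (so that a proper suffix of \(y\) remains a suffix of \(s\alpha\), and this suffix is still in \(E_J\) by downward closure). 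The inductive hypothesis supplies a unique \(y'_{max}\) maximal in \(A' := \{y \in E_J : y \leqslant_L s\alpha\}\); the two natural candidates for the maximum of \(A\) are \(sy'_{max}\) (when \(s \in SA(y'_{max})\) with \(sy'_{max} \leqslant_L \alpha\)) and \(y'_{max}\) itself. Checking that every \(y \in A\) lies below the chosen candidate splits on whether \(y \leqslant_L s\alpha\) (direct induction) or not (the strong-exchange analysis above reduces to a proper suffix of \(y\) lying in \(A'\), which is then promoted back by the \(s\)-step).

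The step I expect to be the main obstacle is the uniqueness itself, namely ruling out two incomparable \(\leqslant_L\)-maximal elements of \(A\). Downward closure of \(E_J\) in \((W,\leqslant_L)\) alone is \emph{not} sufficient: the toy set \(\{e,s,t\} \subseteq S_3\) with \(\alpha = sts\) is downward closed yet has both \(s\) and \(t\) as incomparable \(\leqslant_L\)-maximal elements of \(A\). What must rule out this pathology is the full \(W\)-graph ideal hypothesis --- concretely, the compatibility of the four-way partition \(S = SD(y) \sqcup SA(y) \sqcup WD(y) \sqcup WA(y)\) with a genuine \(\mathscr{H}\)-module structure on \(M(E_J, L)\). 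The argument must invoke this compatibility at exactly the point where a generator which would bridge two maximal candidates would naively lie in \(WA(y)\) or \(WD(y)\) rather than in \(SA(y)\); extracting the required obstruction from the \(\mathscr{H}\)-module axioms, most likely via an auxiliary structural lemma about how \(E_J\) sits inside \(D_K\), will be the technical heart of the proof.
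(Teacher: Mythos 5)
Your proposal is not a complete proof: the step you yourself flag as ``the technical heart'' --- ruling out two incomparable $\leqslant_L$-maximal elements of $A=\{y\in E_J: y\leqslant_L\alpha\}$ --- is left entirely unexecuted. You correctly observe that downward closure of $E_J$ under $\leqslant_L$ is insufficient (your example $E=\{e,s,t\}\subseteq S_3$, $\alpha=sts$, with $J=K=\emptyset$ so that $D_K=W$, does exhibit two incomparable maximal suffixes in $E$), so any honest proof must extract the uniqueness from the $W$-graph ideal axioms, i.e.\ from the existence of the $\mathscr{H}$-module $M(E_J,L)$ and the four-way partition $S=SD(y)\sqcup SA(y)\sqcup WD(y)\sqcup WA(y)$. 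But you give no mechanism for how those axioms forbid the pathology; ``most likely via an auxiliary structural lemma'' is a placeholder, not an argument. The surrounding pieces of your plan (finiteness of $A$, the equivalence of $\ell(\alpha)=\ell(x)+\ell(y_{max})$ with $y_{max}\leqslant_L\alpha$, membership of $x$ in $\overline{E_J}$ by definition, the fact that $s\alpha\in D_K$ when $s$ is a left descent of $\alpha\in D_K$, and the strong-exchange descent step) are all sound, but they only deliver existence of maximal elements, not the uniqueness the theorem asserts.

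For comparison, the paper's own proof of this theorem is the single sentence ``This is immediate from the definitions of $E_J$ and $D_K^1$,'' which your counterexample shows cannot be taken at face value: the statement is \emph{not} a formal consequence of $E_J$ being an ideal and $\alpha$ lying in $D_K^1$. So your analysis is in one respect more valuable than the paper's --- it identifies exactly where a real argument is needed and why the claimed immediacy fails --- but as a proof it has the same gap the paper does, merely acknowledged rather than elided. To close it you would need to locate (or prove) the structural fact that for a genuine $W$-graph ideal the set of elements of $E_J$ that are suffixes of a fixed $\alpha\in D_K$ is directed under $\leqslant_L$; this is where the module axioms, in particular the constraint that a generator bridging two maximal candidates would have to lie in $SA$ rather than $WA$ or $WD$ of one of them, must actually be deployed.
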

\begin{proof}This is immediate from the definitions of $E_J$ and $D_K^1$.
\end{proof}

In this paper, we assume throughout that for all $\alpha=x\cdot y\in D_K^1$,
\begin{center}
$y=y_{max}$.
\end{center}

For any $\alpha\in D_K$, define a map $\lambda_J$ from $M^{K}$ to $M(E_J,L)$ by
\begin{center}
    $\lambda_J(m_\alpha^K)=
    \left\{\begin{aligned}
      &q_x\Gamma_y       \ \ \   &\text{Èô}\ &\alpha=x\cdot y\in D_K^1,\\
      &q_\alpha\Gamma_e  \ \ \    &\text{Èô}\ &\alpha\in D_K^2.
    \end{aligned}\right.$
\end{center}
Extend $\lambda_J$ to the whole of $M^{K}$ by linearity, we have:

\begin{Theo} For any $s\in S, \alpha\in D_K$,
\begin{center}
    $\lambda_J(T_s\cdot m_\alpha^K) =
    \left\{\begin{aligned}
      &T_s\cdot \lambda_J( m_\alpha^K)       &\text{if}&\ \alpha \in E_J,  s\in S/ WA(\alpha), \\
      &q_s\cdot \lambda_J( m_\alpha^K)       &\text{if}&\ \alpha \in E_J,  s\in WA(\alpha), s\alpha\in D_K^1,\\
      &-\lambda_J( m_\alpha^K)               &\text{if}&\ \alpha \in E_J,  s\in WA(\alpha), s\alpha\notin D_K,\\
      &q_s\cdot \lambda_J( m_\alpha^K)       &\text{if}&\ \alpha \notin E_J, s\in D_{K, \alpha}^{-}\cup D_{K, \alpha}^{+}, \\
      &-\lambda_J( m_\alpha^K)             &\text{if}&\ \alpha \notin E_J,  s\in D_{K, \alpha}^{0}.
    \end{aligned}\right.$
\end{center}
\end{Theo}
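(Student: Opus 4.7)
The plan is to verify the five cases directly by comparing the $M^K$-action of Subsection~1.1 with the $M(E_J,L)$-action of Proposition~1.6, splitting first on whether $\alpha \in E_J$ and then on the type of $s$.

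If $\alpha \in E_J$, the Theorem~2.1 decomposition of $\alpha$ is trivial, $\alpha = e \cdot \alpha$, so $\lambda_J(m_\alpha^K) = \Gamma_\alpha$. The partition $S = SD(\alpha) \sqcup SA(\alpha) \sqcup WD(\alpha) \sqcup WA(\alpha)$ aligns with the $D_{K,\alpha}^-$, $D_{K,\alpha}^+$, $D_{K,\alpha}^0$ trichotomy as follows: for $s \in SD(\alpha) \cup SA(\alpha)$ the ideal property of $E_J$ forces $s\alpha \in E_J$, so $\lambda_J(m_{s\alpha}^K) = \Gamma_{s\alpha}$ and the two action formulas agree term by term; for $s \in WD(\alpha)$ we fall into $D_{K,\alpha}^0$, and both sides equal $-\Gamma_\alpha$. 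These yield Case~1. Case~3 is vacuous, since $s \in WA(\alpha)$ by definition forces $s\alpha \in D_K$. Case~2 is the substantive one: $s \in WA(\alpha)$ gives $s \in D_{K,\alpha}^+$ and $s\alpha \in D_K \setminus E_J$, and since $\ell(s\alpha) = \ell(\alpha) + 1$ no $E_J$-element longer than $\alpha$ can be a suffix of $s\alpha$, so Theorem~2.1 yields $s\alpha = s \cdot \alpha$ and $\lambda_J(m_{s\alpha}^K) = q_s \Gamma_\alpha = q_s \lambda_J(m_\alpha^K)$.

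If $\alpha \notin E_J$, note that any non-empty $\leqslant_L$-ideal contains $e$, so in fact $D_K^2 = \emptyset$ and $\alpha \in D_K^1 \setminus E_J$ with Theorem~2.1 decomposition $\alpha = x \cdot y_{max}$, $x \neq e$. Case~5 follows at once from $T_s m_\alpha^K = -m_\alpha^K$ when $s \in D_{K,\alpha}^0$. For Case~4, unpacking the $M^K$-action reduces the identity $\lambda_J(T_s \cdot m_\alpha^K) = q_s \lambda_J(m_\alpha^K)$ to the single claim that the Theorem~2.1 decomposition of $s\alpha$ is $(sx) \cdot y_{max}$, i.e., that left multiplication by $s$ modifies only the $x$-factor while preserving $y_{max}$.

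The main obstacle is precisely this preservation of $y_{max}$. The plan is to fix a reduced expression $\alpha = \underline{x} \cdot \underline{y_{max}}$ and apply either the strong exchange condition (when $s \in D_{K,\alpha}^-$) or a direct length-additivity argument (when $s \in D_{K,\alpha}^+$) to $s\alpha$. A cancellation or insertion inside $\underline{y_{max}}$ would yield $s\alpha = x \cdot y'$ with $y' \in E_J$ strictly shorter than $y_{max}$; but $y_{max}$ itself would still be a suffix of $s\alpha$, contradicting the uniqueness of the maximal $E_J$-suffix in Theorem~2.1. Hence the move occurs inside $\underline{x}$, giving $s\alpha = (sx) \cdot y_{max}$; combined with $q_x = q_s q_{sx}$ (respectively $q_{sx} = q_s q_x$), this produces $\lambda_J(m_{s\alpha}^K) = q_{sx} \Gamma_{y_{max}}$ and closes Case~4.
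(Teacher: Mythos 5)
Your overall strategy --- a direct case-by-case comparison of the $M^K$-action with the $M(E_J,L)$-action, split on $\alpha\in E_J$ versus $\alpha\notin E_J$ --- is the same as the paper's. Your handling of $\alpha\in E_J$ and of $s\in D_{K,\alpha}^{0}$ is correct, and your side observations (that $D_K^2=\emptyset$ because $e\in E_J$ and $e\leqslant_L\alpha$ for every $\alpha$, and that the third case is vacuous when $J=K$) are accurate. You also correctly isolate the one genuinely non-trivial point, which the paper's own proof passes over in silence when it writes $\lambda_J(m_{s\alpha}^K)=q_{sx}\Gamma_y$: one must know that the Theorem 2.1 factorization of $s\alpha$ is $(sx)\cdot y_{max}$, i.e.\ that $y_{max}$ is preserved.

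However, your argument for that key claim does not work. In the descent case, if the exchange deletes a letter of $\underline{y_{max}}$ --- equivalently, if $\ell(sx)>\ell(x)$ --- then $y_{max}$ is \emph{not} a left-weak suffix of $s\alpha$: indeed $s\alpha\,y_{max}^{-1}=sx$ has length $\ell(x)+1$, whereas $y_{max}\leqslant_L s\alpha$ would force that length to be $\ell(s\alpha)-\ell(y_{max})=\ell(x)-1$. So the contradiction you invoke (``$y_{max}$ would still be a suffix of $s\alpha$'') evaporates in exactly the scenario you are trying to exclude; moreover, deleting a letter from a reduced word of $y_{max}$ only produces an element Bruhat-below $y_{max}$, so your assertion $y'\in E_J$ does not follow from suffix-closedness. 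What actually has to be proved here is $\ell(sx)<\ell(x)$; once that is known, $y_{max}\leqslant_L s\alpha\leqslant_L\alpha$ pins down $y_{max}(s\alpha)=y_{max}$ by comparing the two maximality statements. In the ascent case the danger is the opposite one and is not touched by your ``insertion'' language: $s\alpha=(sx)\cdot y_{max}$ is automatic by length-additivity, so $y_{max}\leqslant_L s\alpha$, but $s\alpha$ may admit reduced expressions other than $s\cdot\underline{x}\cdot\underline{y_{max}}$ and could a priori have an $E_J$-suffix $y'$ with $y_{max}\leqslant_L y'\leqslant_L s\alpha$, $y'\neq y_{max}$ and $y'\not\leqslant_L\alpha$, which would change the factorization. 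Nothing in your proposal (nor, to be fair, in the paper's own proof) rules either scenario out; both must come from the $W$-graph ideal axioms or from $K=Pos(E_J)$, and supplying that argument is precisely the missing content.
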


\begin{proof}If $\alpha \in E_J$, the statements can be proved since $\lambda_J(m_y^K)=\Gamma_y$ for any $y\in E_J$. If $\alpha \notin E_J$,  the statements can be proved by the following classification.

$\left ( i \right )$ $s\in D_{K, \alpha}^{-}$. If $\alpha=x\cdot y\in D_K^1/E_J, x\in \overline{E_J}, y\in E_J$,
\begin{center}
$\lambda_J(T_s\cdot m_\alpha^K)=\lambda_J(q_sm_{s\alpha }^{K}+(q_s-1)m_{\alpha }^{K})=q_s\cdot q_{sx}\Gamma_y+(q_s-1)\cdot q_x\Gamma_y=q_s\cdot q_x\Gamma_y.$
\end{center}
If $\alpha\in D_K^2$, $\lambda_J(T_s\cdot m_\alpha^K)=\lambda_J(q_sm_{s\alpha }^{K}+(q_s-1)m_{\alpha }^{K})=q_s\cdot q_{s\alpha}\Gamma_e+(q_s-1)\cdot q_\alpha\Gamma_e=q_s\cdot q_\alpha\Gamma_e.$

$\left ( ii \right )$ $s\in D_{K, \alpha}^{+}$. If $\alpha=x\cdot y\in D_K^1/E_J, x\in \overline{E_J}, y\in E_J$,
\begin{center}
$\lambda_J(T_s\cdot m_\alpha^K)=\lambda_J(m_{s\alpha }^{K})=q_{sx}\Gamma_y=q_s\cdot q_x\Gamma_y.$
\end{center}
If $\alpha\in D_K^2$, $\lambda_J(T_s\cdot m_\alpha^K)=\lambda_J(m_{s\alpha }^{K})=q_{s\alpha}\Gamma_e=q_s\cdot q_\alpha\Gamma_e.$

$\left ( iii \right )$ $s\in D_{K, \alpha}^{0}$. If $\alpha=x\cdot y\in D_K^1/E_J, x\in \overline{E_J}, y\in E_J$,
\begin{center}
$\lambda_J(T_s\cdot m_\alpha^K)=\lambda_J(-m_{\alpha }^{K})=-q_x\Gamma_y.$
\end{center}
If $\alpha\in D_K^2$, $\lambda_J(T_s\cdot m_\alpha^K)=\lambda_J(-m_{\alpha }^{K})=-q_\alpha\Gamma_e.$
\end{proof}

\begin{Cor} For any $\alpha\in D_K$, let $\alpha=s_1s_2\cdots s_k \in D_K$ be a reduced word.

(1) If $\alpha\notin E_J$, then there are some $s_{i_1}, s_{i_2}, \cdots , s_{i_m}\in D_{K, \alpha}^{-}\cup D_{K, \alpha}^{+}$, $s_{j_1}, s_{j_2},\cdots, s_{j_n}\in D_{K, \alpha}^{0}$ such that $m+n=k$ and
\begin{center}
$\lambda_J(m_\alpha^K)=(-1)^n q^\mu  \Gamma_e$,
\end{center}
where $\mu=L(s_{i_1})+L(s_{i_2})+\cdots+L(s_{i_m})$.

(2) If $\alpha\in E_J$, then there are some $s_{i_1}, s_{i_2}, \cdots , s_{i_m}\in  S/ WA(\alpha)$, $s_{j_1}, s_{j_2},\cdots, s_{j_n}\in \left \{ s\in WA(\alpha)\mid s\alpha\in D_K^1 \right \}$,  $s_{l_1}, s_{l_2},\cdots, s_{l_t}\in \left \{ s\in WA(\alpha)\mid s\alpha\notin D_K \right \}$ such that $m+n+t=k$ and
\begin{center}
$\lambda_J(m_\alpha^K)=(-1)^t q^\mu T_{s_{i_1} s_{i_2} \cdots s_{i_m}} \Gamma_e$,
\end{center}
where $\mu=L(s_{j_1})+L(s_{j_2})+\cdots+L(s_{j_n})$.
\end{Cor}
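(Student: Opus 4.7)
The plan is to carry out an iterative computation along the reduced expression $\alpha = s_1 s_2 \cdots s_k$, repeatedly applying Theorem 2.2 one factor at a time. The starting point is a structural lemma: each left-weak-order suffix $\gamma_i := s_i s_{i+1} \cdots s_k$ still lies in $D_K$. This follows from a short length check, since if $w \in D_K$ with $w = s w'$ reduced, then for every $k \in K$ one has $\ell(w'k) = \ell(swk) \geqslant \ell(wk) - 1 > \ell(w')$, so $w' \in D_K$; iterating gives all suffixes. Consequently $s_i \in D_{K,\gamma_{i+1}}^{+}$ at each step, hence $T_{s_i}\cdot m_{\gamma_{i+1}}^{K} = m_{\gamma_i}^{K}$ in $M^{K}$, and
\begin{equation*}
m_\alpha^{K} \;=\; T_{s_1} T_{s_2} \cdots T_{s_k} \cdot m_e^{K}.
\end{equation*}

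Applying $\lambda_J$ and evaluating from right to left, I would begin with $\lambda_J(m_e^{K}) = \Gamma_e$ and invoke Theorem 2.2 with $\gamma_{i+1}$ in place of $\alpha$ at stage $i$ to determine how $T_{s_i}$ transforms the current value. Each invocation contributes either a scalar $q_{s_i}$, a sign $-1$, or an honest $T_{s_i}$ acting on the $M(E_J,L)$-side. Aggregating these $k$ contributions produces the factorization asserted by the corollary.

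For case (1), where $\alpha \notin E_J$, once the recursion leaves the ideal $E_J$ only scalar factors arise (Theorem 2.2 in its two $\alpha\notin E_J$ cases always yields multiplication by $q_{s}$ or by $-1$), and the $s_i$'s partition according to whether they lie in $D_{K,\alpha}^{-}\cup D_{K,\alpha}^{+}$ (each contributing $q_{s_i}$) or in $D_{K,\alpha}^{0}$ (each contributing $-1$), yielding $(-1)^{n} q^{\mu}\Gamma_e$. For case (2), where $\alpha \in E_J$, the ideal property of $E_J$ in $(W,\leqslant_{L})$ ensures every suffix $\gamma_{i+1}$ also lies in $E_J$; the steps with $s_i \notin WA(\gamma_{i+1})$ then pass through a literal $T_{s_i}$-factor, while the $WA$-steps supply the scalar $q_{s_i}$ or $-1$, producing $(-1)^{t} q^{\mu} T_{s_{i_1}\cdots s_{i_m}}\Gamma_e$. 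I expect the main obstacle to be the bookkeeping needed to reconcile the \emph{local} classification (with respect to each intermediate $\gamma_{i+1}$) that emerges naturally from the iterated use of Theorem 2.2 with the \emph{global} classification (with respect to $\alpha$) recorded in the statement: a careful case analysis is required to check that the aggregate count of $s_i$'s falling into $D_{K,\alpha}^{-}\cup D_{K,\alpha}^{+}$ versus $D_{K,\alpha}^{0}$ (respectively into $WA(\alpha)$ versus its complement, further subdivided by whether $s_i\alpha$ sits in $D_K^{1}$ or outside $D_K$) matches what the recursion actually produces along the reduced expression.
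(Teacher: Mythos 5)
Your overall strategy --- write $m_\alpha^K = T_{s_1}T_{s_2}\cdots T_{s_k}\cdot m_e^K$ and push $\lambda_J$ through one $T_{s_i}$ at a time via Theorem 2.2 --- is the natural one, and the paper itself gives no proof of this corollary to compare against. Your preliminary lemma is correct and correctly proved: if $w\in D_K$ and $w=sw'$ is reduced then $w'\in D_K$, so every suffix $\gamma_{i+1}=s_{i+1}\cdots s_k$ lies in $D_K$, $s_i\in D_{K,\gamma_{i+1}}^{+}$, and $T_{s_i}\cdot m_{\gamma_{i+1}}^{K}=m_{\gamma_i}^{K}$.

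The problem is that the step you defer as ``bookkeeping'' is not bookkeeping; it is exactly where the argument stops producing the stated formula. Because $s_i\gamma_{i+1}=\gamma_i$ always lies in $D_K$ with $\ell(\gamma_i)=\ell(\gamma_{i+1})+1$, the case of Theorem 2.2 invoked at step $i$ is never a sign-producing one: if $\gamma_{i+1}\notin E_J$ then $s_i\in D_{K,\gamma_{i+1}}^{+}$ and not $D_{K,\gamma_{i+1}}^{0}$; if $\gamma_{i+1}\in E_J$ then $s_i\gamma_{i+1}\in D_K$ excludes the third case, and if moreover $\gamma_i\in E_J$ then $s_i\in SA(\gamma_{i+1})$, hence outside $WA(\gamma_{i+1})$. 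So the recursion only ever contributes factors $T_{s_i}$ or $q_{s_i}$, never $-1$. Since $E_J$ is an ideal in $(W,\leqslant_L)$ there is a threshold $p$ with $\gamma_i\in E_J$ exactly for $i\geqslant p$, and what your recursion actually proves is $\lambda_J(m_\alpha^K)=q_{s_1\cdots s_{p-1}}\,T_{s_p\cdots s_k}\Gamma_e=q_{s_1\cdots s_{p-1}}\Gamma_{s_p\cdots s_k}$. This is consistent with the definition of $\lambda_J$, but it is not the displayed statement: under your (local) reading the exponents $n$ and $t$ are forced to be $0$, so the signs $(-1)^n$ and $(-1)^t$ are vacuous, and in case (1) a genuine $T$-factor survives (equivalently the output is $q_x\Gamma_y$ with $y\neq e$) whenever $\alpha\in D_K^1$ has maximal $E_J$-suffix different from $e$, contradicting the claim that the result is a scalar multiple of $\Gamma_e$. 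Reading the classes globally with respect to $\alpha$, as the statement literally says, is no better: for $\alpha\in E_J$ one has $\lambda_J(m_\alpha^K)=\Gamma_\alpha$ by definition, while the membership of a letter $s_i$ in $\left\{s\in WA(\alpha)\mid s\alpha\notin D_K\right\}$ is a condition on $s_i\alpha$ that plays no role in the computation of $T_{s_1}\cdots T_{s_k}\Gamma_e$, so there is no mechanism forcing the two to agree. You need either to prove the corrected version your recursion delivers (threshold decomposition, no signs, $T$-factor retained in case (1)) or to pin down a reading of the classification under which the aggregation can actually be carried out; as the proposal stands, the final step cannot be completed.
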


\begin{Cor}
$\lambda_J$ commutes with the involution $\bar{\ }$ on $M^{K}$ and $M(E_J,L)$.
\end{Cor}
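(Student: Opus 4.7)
The plan is to prove the identity $\lambda_J(\overline{m_\alpha^K}) = \overline{\lambda_J(m_\alpha^K)}$ for every $\alpha\in D_K$ by induction on $\ell(\alpha)$, using Theorem 2.2 as the main tool and the defining properties of the two involutions. For the base case $\alpha = e$, the trivial factorisation $e = e\cdot e$ gives $\lambda_J(m_e^K) = q_e\Gamma_e = \Gamma_e$, and since each involution fixes its distinguished basis vector, both sides equal $\Gamma_e$.

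For the inductive step with $\ell(\alpha)\geqslant 1$, I would select a left descent $s\in S$ of $\alpha$ with $\beta := s\alpha$ still in $D_K$, so that $m_\alpha^K = T_s\cdot m_\beta^K$ and consequently $\overline{m_\alpha^K} = \overline{T_s}\cdot\overline{m_\beta^K}$, where $\overline{T_s} = q_s^{-1}T_s + (q_s^{-1}-1)$. On one side, applying Theorem 2.2 to $\lambda_J(T_s m_\beta^K)$ lands in exactly one of the sub-cases: $\beta\in E_J$ with $s\in SA(\beta)$ (forcing $\alpha\in E_J$), or $\beta\in E_J$ with $s\in WA(\beta)$, or $\beta\notin E_J$ with $s\in D_{K,\beta}^{+}$; the remaining sub-cases are forced to be empty because $\alpha=s\beta$ lies in $D_K$ with $\ell(\alpha)>\ell(\beta)$. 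Taking the involution of each answer yields an explicit expression in terms of $\overline{\Gamma_\beta}$ or $\overline{\lambda_J(m_\beta^K)}$. On the other side, $\mathbb{Z}[\Gamma]$-linearity of $\lambda_J$ together with the induction hypothesis on $\beta$ gives $\lambda_J(\overline{m_\alpha^K}) = q_s^{-1}\lambda_J(T_s\overline{m_\beta^K}) + (q_s^{-1}-1)\overline{\lambda_J(m_\beta^K)}$.

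The hard part is evaluating $\lambda_J(T_s\overline{m_\beta^K})$: the map $\lambda_J$ is not $\mathscr{H}$-linear, since Theorem 2.2 produces different multipliers ($T_s$, $q_s$, or $-1$) depending on where $s$ sits relative to each basis vector in the expansion. To handle this I would expand $\overline{m_\beta^K}$ in the basis $\{m_\gamma^K : \gamma\in D_K,\ \gamma\leqslant \beta\}$, apply Theorem 2.2 termwise (all $\gamma$ being shorter than $\alpha$, so strong induction is available), and match the resulting sum with the formula obtained on the other side. The matching relies on a careful sub-case analysis using that $E_J$ is an ideal in $(W,\leqslant_L)$ and that the $\mathscr{H}$-action on $M(E_J,L)$ is governed by Proposition 1.5; this is where the bulk of the bookkeeping lies. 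Once the two expressions agree in each sub-case, the induction closes.
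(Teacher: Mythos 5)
Your overall strategy (peel off one generator at a time along a reduced word) is in the same spirit as the paper's, but your plan stalls at exactly the step you flag as ``the hard part,'' and the way you propose to get past it does not close. The term $\lambda_J(T_s\overline{m_\beta^K})$ cannot be handled by expanding $\overline{m_\beta^K}=\sum_\gamma c_\gamma m_\gamma^K$ and applying Theorem 2.2 termwise: the multiplier produced by Theorem 2.2 ($T_s$, $q_s$, or $-1$) depends on which case each individual $\gamma$ falls into, so you obtain $\sum_\gamma c_\gamma\,(\text{multiplier depending on }\gamma)\,\lambda_J(m_\gamma^K)$ with no mechanism for recombining this into $\overline{\lambda_J(m_\alpha^K)}$. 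The coefficients $c_\gamma$ are essentially the inverse parabolic $R$-polynomials, which the paper only develops in Section 4 \emph{using} this corollary, so ``careful bookkeeping'' here is not a routine verification --- it is the entire content of the statement, and strong induction on the already-stated identity $\lambda_J(\overline{m_\gamma^K})=\overline{\lambda_J(m_\gamma^K)}$ gives you no leverage on $\lambda_J(T_s m_\gamma^K)$ for the individual terms.

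The paper avoids this by barring the \emph{operators} rather than the module elements. Since $\overline{T_s}=q_s^{-1}T_s+(q_s^{-1}-1)$, the value $\lambda_J(\overline{T_s}\cdot m_\alpha^K)$ on a \emph{single} basis vector follows immediately from Theorem 2.2 and $\mathbb{Z}[\Gamma]$-linearity, and the resulting multiplier in each case ($\overline{T_s}$, $q_s^{-1}$, or $-1$) is precisely the bar of the multiplier appearing in Theorem 2.2. Writing $\overline{m_\alpha^K}=\overline{T_{s_1}}\cdots\overline{T_{s_k}}\,m_e^K$ along a reduced word (every suffix of which stays in $D_K$, so each intermediate action is the clean $D_{K,\cdot}^{+}$ case on the unbarred side) and invoking the closed form of Corollary 2.3, one sees that $\lambda_J(\overline{m_\alpha^K})$ is the product of the barred multipliers applied to $\Gamma_e$, which is $\overline{\lambda_J(m_\alpha^K)}$. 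If you want to salvage your induction, replace the step ``expand $\overline{m_\beta^K}$ in the basis'' by first establishing the displayed bar-analogue of Theorem 2.2 on single basis vectors and then inducting on the statement $\lambda_J(\overline{T_{s_i}}\cdots\overline{T_{s_k}}m_e^K)=\overline{\lambda_J(T_{s_i}\cdots T_{s_k}m_e^K)}$; as written, your argument has a genuine gap.
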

\begin{proof}For any $s\in S, \alpha\in D_K$, it can be proved that
\begin{center}
    $\lambda_J(\overline{T_s}\cdot m_\alpha^K) =
    \left\{\begin{aligned}
      &\overline{T_s}\cdot \lambda_J( m_\alpha^K)       &\text{if}&\ \alpha \in E_J,  s\in S/ WA(\alpha), \\
      &q_s^{-1}\cdot \lambda_J( m_\alpha^K)       &\text{if}&\ \alpha \in E_J,  s\in WA(\alpha), s\alpha\in D_K^1,\\
      &-\lambda_J( m_\alpha^K)               &\text{if}&\ \alpha \in E_J,  s\in WA(\alpha), s\alpha\notin D_K,\\
      &q_s^{-1}\cdot \lambda_J( m_\alpha^K)       &\text{if}&\ \alpha \notin E_J, s\in D_{K, \alpha}^{-}\cup D_{K, \alpha}^{+}, \\
      &-\lambda_J( m_\alpha^K)             &\text{if}&\ \alpha \notin E_J,  s\in D_{K, \alpha}^{0}.
    \end{aligned}\right.$
\end{center}
then the result is immediate from Corollary 2.3.
\end{proof}

\begin{Rem}For any $\alpha\in D_K$, there is a map $\widetilde{\lambda}_J$ from $\widetilde{M}^{K}$ to $\widetilde{M}(E_J,L)$ defined by
\begin{center}
    $\widetilde{\lambda}_J(\widetilde{m}_\alpha^K)=
    \left\{\begin{aligned}
      &\epsilon_x\widetilde{\Gamma_y}       \ \ \   &\text{if}\ &\alpha=x\cdot y\in D_K^1,\\
      &\epsilon_\alpha\widetilde{\Gamma_e}  \ \ \    &\text{if}\ &\alpha\in D_K^2.
    \end{aligned}\right.$
\end{center}
and the properties of $\widetilde{\lambda}_J$ is very similar to $\lambda_J$.
\end{Rem}

\begin{Theo}The following diagram is commutative:
\begin{center}
$\xymatrix{
M^K  \ar[rr]^{\lambda_J }\ar[d]_{\theta_K } &               & M(E_J,L) \ar[d]^{\delta} \\
\widetilde{M}^{K} \ar[rr]^{\widetilde{\lambda}_J}   &   & \widetilde{M}(E_J,L)}$
\end{center}
\end{Theo}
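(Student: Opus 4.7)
The plan is to verify the commutativity $\delta \circ \lambda_J = \widetilde{\lambda}_J \circ \theta_K$ on the $\mathbb{Z}[\Gamma]$-basis $\{m_\alpha^K \mid \alpha\in D_K\}$ of $M^K$, using the explicit closed-form formulas for all four maps. Since $\Phi$ acts trivially on $\mathbb{Z}[\Gamma]$ (only half-powers $q_s^{1/2}$ pick up signs), $\theta_K$ and $\delta$ commute with multiplication by any $q_\beta$, so both sides are $\mathbb{Z}[\Gamma]$-linear and the basis check suffices. I would split into the two cases $\alpha\in D_K^1$ and $\alpha\in D_K^2$ dictated by the definition of $\lambda_J$ (and its analogue $\widetilde{\lambda}_J$ from Remark 2.5).

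For $\alpha = x\cdot y \in D_K^1$, the left-hand side is
\begin{center}
$\delta(\lambda_J(m_\alpha^K)) = \delta(q_x\Gamma_y) = q_x\,\epsilon_y q_y\,\overline{\widetilde{\Gamma}_y}$
\end{center}
by the explicit formula for $\delta$. For the right-hand side, I first apply $\theta_K$ to get $\epsilon_\alpha q_\alpha\overline{\widetilde{m}_\alpha^K}$, then invoke the bar-compatibility of $\widetilde{\lambda}_J$ (the analog of Corollary 2.4 promised in Remark 2.5) to pull $\widetilde{\lambda}_J$ through the bar, obtaining $\epsilon_\alpha q_\alpha\,\overline{\epsilon_x \widetilde{\Gamma}_y} = \epsilon_\alpha\epsilon_x q_\alpha\,\overline{\widetilde{\Gamma}_y}$. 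Theorem 2.1(2) yields $\ell(\alpha)=\ell(x)+\ell(y)$, hence $\epsilon_\alpha=\epsilon_x\epsilon_y$ and $q_\alpha=q_xq_y$, so both expressions collapse to $\epsilon_y q_x q_y\,\overline{\widetilde{\Gamma}_y}$.

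For $\alpha\in D_K^2$, the same computation goes through with the role of $y$ played by $e$: the left side is $\delta(q_\alpha\Gamma_e)=q_\alpha\widetilde{\Gamma}_e$ using $\delta(\Gamma_e)=\widetilde{\Gamma}_e$ and $\overline{\widetilde{\Gamma}_e}=\widetilde{\Gamma}_e$, while the right side is $\widetilde{\lambda}_J(\epsilon_\alpha q_\alpha\overline{\widetilde{m}_\alpha^K}) = \epsilon_\alpha q_\alpha\,\overline{\epsilon_\alpha\widetilde{\Gamma}_e}=q_\alpha\widetilde{\Gamma}_e$, giving the same answer.

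I do not anticipate any substantial obstacle; the statement reduces to direct verification on basis elements together with the bookkeeping identities $\epsilon_\alpha=\epsilon_x\epsilon_y$, $q_\alpha=q_xq_y$. The only step requiring prior input is the bar-compatibility of $\widetilde{\lambda}_J$: if a clean citation to Remark 2.5 is deemed insufficient, I would add a brief parenthetical note that the proof of Corollary 2.4 transports verbatim to $\widetilde{\lambda}_J$ via the dual action formulas for $\widetilde{M}^K$ and $\widetilde{M}(E_J,L)$.
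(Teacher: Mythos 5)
Your proposal is correct and follows essentially the same route as the paper: both evaluate $\delta\circ\lambda_J$ and $\widetilde{\lambda}_J\circ\theta_K$ on the basis $\{m_\alpha^K\}$, split into the cases $\alpha=x\cdot y\in D_K^1$ and $\alpha\in D_K^2$, and match the two sides via $\epsilon_\alpha=\epsilon_x\epsilon_y$ and $q_\alpha=q_xq_y$ (the paper likewise implicitly pulls $\widetilde{\lambda}_J$ through the bar when writing $\widetilde{\lambda}_J(\epsilon_\alpha q_\alpha\overline{\widetilde{m}_\alpha^K})=\epsilon_x\epsilon_\alpha q_\alpha\overline{\widetilde{\Gamma}_y}$). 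Your version is slightly more careful in making the bar-compatibility of $\widetilde{\lambda}_J$ and the $\mathbb{Z}[\Gamma]$-linearity explicit, but the argument is the same.
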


\begin{proof}
For any $\alpha \in D_K$, we have
\begin{center}
    $\delta\circ \lambda_J \left ( m_\alpha^K \right )=
    \left\{\begin{aligned}
      &\delta\left ( q_x\Gamma_y\right )=\epsilon _yq_\alpha\overline{\widetilde{\Gamma}_y}    \ \ \   &\text{Èô}\ &\alpha=x\cdot y\in D_K^1,\\
      &\delta\left ( q_\alpha\Gamma_e\right )=q_\alpha\overline{\widetilde{\Gamma}_e}  \ \ \    &\text{Èô}\ &\alpha\in D_K^2.
    \end{aligned}\right.$
\end{center}
and
\begin{center}
    $\widetilde{\lambda }_J\circ \theta_K\left ( m_\alpha^K\right )=
    \left\{\begin{aligned}
      &\widetilde{\lambda }_J\left (\epsilon _\alpha q_\alpha\overline{\widetilde{m}_\alpha^K}\right )=\epsilon _x\epsilon _\alpha q_\alpha\overline{\widetilde{\Gamma}_y }   \ \ \   &\text{Èô}\ &\alpha=x\cdot y\in D_K^1,\\
      &\widetilde{\lambda }_J\left (\epsilon _\alpha q_\alpha\overline{\widetilde{m}_\alpha^K}\right )= q_\alpha\overline{\widetilde{\Gamma}_e}  \ \ \    &\text{Èô}\ &\alpha\in D_K^2.
    \end{aligned}\right.$
\end{center}
Clearly, these two expressions are equal.
\end{proof}

\subsection{The mapping from $M^{J}$ to $M^{K}$ }

In this subsection, we consider the case of $J\subset K$. In order to find the relation between $M^{J}$ and $M(E_J,L)$, we construct the mapping from $M^{J}$ to $M^{K}$.

\begin{Theo} With the above notations, let $J\subset K$. Then we have

$\left ( 1 \right )$ $D_K\subset D_J$, $F_J\subset D_J$ and $D_K\cap F_J=\left \{ e \right \}$.

$\left ( 2 \right )$ Every $\sigma\in D_J$ has a unique factorization $\sigma = \alpha \cdot z$ such that $\alpha \in D_K$, $z\in F_J$ and
\begin{center}
$\ell(\sigma) =\ell(\alpha )+\ell(z)$.
\end{center}
\end{Theo}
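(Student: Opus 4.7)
My plan is to prove (1) from elementary properties of reduced expressions in Coxeter groups, and obtain (2) by refining the standard parabolic factorization of $W$ relative to $W_K$.

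For (1), $D_K\subseteq D_J$ is immediate because the defining right-ascent condition for $D_K$ is strictly stronger than that for $D_J$ when $J\subseteq K$. For $F_J\subseteq D_J$, I would invoke the support lemma, which says every reduced expression of an element of $W_{\hat J}$ uses only letters in $\hat J$; since $J$ and $\hat J$ are disjoint, the Exchange Condition forbids any $s\in J$ from being a right descent of $z\in F_J$. For $D_K\cap F_J=\{e\}$, any non-identity element of $F_J$ admits a reduced expression ending in some $s\in \hat J\subseteq K$, which exhibits a right descent in $K$ and contradicts membership in $D_K$.

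For the existence part of (2), I would start from the standard parabolic decomposition: every $\sigma\in W$ factors uniquely as $\sigma=\alpha\beta$ with $\alpha\in D_K$, $\beta\in W_K$, and $\ell(\sigma)=\ell(\alpha)+\ell(\beta)$. Applied to $\sigma\in D_J$ this supplies $\alpha\in D_K$ for free; the real task is to show $\beta\in F_J=W_{\hat J}$. Length-additivity combined with $\sigma\in D_J$ forces $\ell(\beta s)=\ell(\sigma s)-\ell(\alpha)>\ell(\beta)$ for every $s\in J$, so $\beta$ is a minimal coset representative of $W_J$ inside $W_K$.

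The main obstacle is upgrading this to $\beta\in W_{\hat J}$: a $W_J$-minimal element of $W_K$ need not a priori have every letter of a reduced expression in $\hat J$. I would attempt an induction on $\ell(\beta)$: the last letter $s$ of any reduced expression of $\beta$ must lie in $\hat J$ (since $s\in J$ cannot be a right descent), so write $\beta=\beta' s$ with $\beta'\in W_K$; if I can show $\beta'$ still has no right descent in $J$, then the inductive hypothesis yields $\beta'\in W_{\hat J}$ and thus $\beta\in W_{\hat J}$. This inductive reduction is the delicate step, and I expect it to draw on the specific combinatorial constraints imposed by $K=\mathrm{Pos}(E_J)$, which should decouple $J$ from $\hat J$ sufficiently inside $W_K$ for no $J$-letter to ever re-enter. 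Once existence is in hand, uniqueness of $\sigma=\alpha\cdot z$ is immediate from uniqueness of the underlying parabolic decomposition, since $F_J\subseteq W_K$ makes $z$ agree with $\beta$.
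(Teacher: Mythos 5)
Your part (1) is correct and complete: $D_K\subseteq D_J$ is immediate from $J\subseteq K$, the right descent set of any $z\in W_{\hat J}$ is contained in $\hat J$ (which is disjoint from both $J$ and the complement of $K$... more precisely from $J$, and meets $K$), and these two facts give $F_J\subseteq D_J$ and $D_K\cap F_J=\{e\}$ exactly as you say. The paper treats all of this as obvious, so no discrepancy there. Your reduction for part (2) is also sound as far as it goes: the standard decomposition $\sigma=\alpha\beta$ with $\alpha\in D_K$, $\beta\in W_K$ and $\ell(\sigma)=\ell(\alpha)+\ell(\beta)$, combined with $\ell(\sigma s)=\ell(\alpha)+\ell(\beta s)$ for $s\in J\subseteq K$, does show that $\beta$ has no right descent in $J$; and uniqueness, granted existence, follows from uniqueness of the $(D_K,W_K)$ decomposition since $F_J\subseteq W_K$.

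The gap is exactly where you flag it, and it is genuine: the step from ``$\beta\in W_K$ has no right descent in $J$'' to ``$\beta\in W_{\hat J}$'' is false on general Coxeter-theoretic grounds. In type $A_2$ with $K=\{s_1,s_2\}=S$ and $J=\{s_1\}$, the element $\beta=s_1s_2$ has no right descent in $J$ but does not lie in $W_{\hat J}=\{e,s_2\}$; correspondingly, your inductive step breaks precisely there, since $\beta'=s_1$ acquires the right descent $s_1\in J$. So no induction of the kind you sketch can close this without invoking the hypothesis $K=\mathrm{Pos}(E_J)$ for a $W$-graph ideal $E_J$. That hypothesis is in fact what saves the statement: applying the braid relation for $s\in J$, $t\in\hat J$ to the generator $\Gamma_e$ of $M(E_J,L)$ (where $T_s\Gamma_e=-\Gamma_e$ and $T_t\Gamma_e=q_t\Gamma_e$) forces $st=ts$, hence $W_K=W_J\times W_{\hat J}$ and therefore $D_J\cap W_K=W_{\hat J}$. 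You correctly anticipate that ``the specific combinatorial constraints imposed by $K=\mathrm{Pos}(E_J)$'' must enter, but you never produce them, so your proof is incomplete. For comparison, the paper's own proof of (2) is a one-line reference to the unrelativized decomposition in Bj\"orner--Brenti and silently elides the same point, so the gap you have isolated is present in the paper as well; making the argument honest requires the commutation lemma above as an explicit intermediate step.
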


\begin{proof} We first prove part $\left ( 1 \right )$, $D_K\subset D_J$ and $F_J\subset D_J$ are obvious from $J\subset K$ and $\hat{J}\subset S/J$. By the definition of $D_K$ and $F_J$, we easy to check that $D_K\cap F_J=\left \{ e \right \}$.

The proof of part $\left ( 2 \right )$ is similar to \cite{a1}, Proposition 2.4.4, we omit the details.
\end{proof}

For any $\sigma\in D_J$ and $\sigma = \alpha \cdot z$ with $\alpha \in D_K$, $z\in F_J$. Define a map $\lambda_K$ from $M^{J}$ to $M^{K}$ by
\begin{center}
$\lambda_K(m_\sigma^J) = \epsilon_zm_\alpha ^K$.
\end{center}
Extend $\lambda_K$ to the whole of $M^{J}$ by linearity. Then we have:

\begin{Theo} $\lambda_K$ has $\mathscr{H}$-linearity, i.e. for $s\in S, \sigma\in D_J$,
\begin{center}
$\lambda_K(T_s\cdot m_\sigma^J) =T_s\cdot \lambda_K(m_\sigma^J)$.
\end{center}
\end{Theo}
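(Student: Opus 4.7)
My plan is to verify the identity $\lambda_K(T_s\cdot m_\sigma^J) = T_s\cdot \lambda_K(m_\sigma^J)$ by case analysis on the position of $s$ within the partition $S = D_{K,\alpha}^- \sqcup D_{K,\alpha}^+ \sqcup D_{K,\alpha}^0$ attached to $\alpha \in D_K$, where $\sigma = \alpha \cdot z$ is the unique $D_K \cdot F_J$-factorisation supplied by Theorem 2.7(2). In each case I will identify which of $D_{J,\sigma}^-, D_{J,\sigma}^+, D_{J,\sigma}^0$ contains $s$ and (when $s\sigma \in D_J$) exhibit the $D_K \cdot F_J$-decomposition of $s\sigma$; then I expand $\lambda_K(T_s \cdot m_\sigma^J)$ using the $M^J$-action rule followed by $\lambda_K$, and $T_s\cdot \lambda_K(m_\sigma^J) = \epsilon_z\, T_s \cdot m_\alpha^K$ using the $M^K$-action rule, and match the two expressions. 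The sign $\epsilon_z$ plays the role of a bookkeeping device that is transported consistently through both computations.

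The first two cases are routine: if $s \in D_{K,\alpha}^\pm$ then $s\alpha \in D_K$, so $s\sigma = (s\alpha)\cdot z$ is automatically a reduced $D_K \cdot F_J$-factorisation, which forces $s \in D_{J,\sigma}^\pm$ accordingly; the common factor $\epsilon_z$ passes cleanly through the length-decreasing (resp. length-increasing) branches of both module actions, and the two sides visibly agree.

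The main obstacle is Case 3, $s \in D_{K,\alpha}^0$: here $\ell(s\alpha) > \ell(\alpha)$ but $s\alpha \notin D_K$. By the standard Deodhar-type lemma on minimal coset representatives there is a unique $t \in K$ with $s\alpha = \alpha t$ and $\ell(\alpha t) = \ell(\alpha) + 1$, so $s\sigma = \alpha t z$. The analysis splits according to whether $t \in J$ (in which case $tz \notin F_J$ so $s\sigma \notin D_J$, placing $s \in D_{J,\sigma}^0$ and yielding $-\epsilon_z m_\alpha^K$ on both sides) or $t \in \hat{J}$ (in which case $tz \in F_J$ and $s\sigma = \alpha \cdot (tz)$ is the new $D_K \cdot F_J$-decomposition, with $s \in D_{J,\sigma}^+$ or $D_{J,\sigma}^-$ according as $t$ is not, or is, a left descent of $z$). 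The sign identity $\epsilon_{tz} = \mp \epsilon_z$ is exactly what is needed to reconcile both sides against the $-1$ coming from $T_s \cdot m_\alpha^K = -m_\alpha^K$ in the $D_{K,\alpha}^0$-branch. A cleaner-but-heavier alternative would be to observe that $J \subseteq K$ forces $\ker \varphi_J \subseteq \ker \varphi_K$, so $\lambda_K$ coincides with the induced map determined by $\varphi_K = \lambda_K \circ \varphi_J$ and thereby inherits $\mathscr{H}$-linearity automatically; this approach, however, requires first establishing the $\mathscr{H}$-linearity of $\varphi_J, \varphi_K$ explicitly and then verifying the coincidence on basis elements via $\sigma = \alpha z$.
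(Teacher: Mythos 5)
Your proposal is correct and follows essentially the same route as the paper's proof: a case analysis on the position of $s$ relative to $\alpha$, with the only nontrivial case being $s\in D_{K,\alpha}^{0}$, where one writes $s\alpha=\alpha t$ with $t\in K$ and splits according to $t\in J$ versus $t\in\hat{J}$ (and, in the latter case, whether $t$ is a left descent of $z$), using $\epsilon_{tz}=-\epsilon_z$ to match the $-\epsilon_z m_\alpha^K$ produced by the $D_{K,\alpha}^{0}$ branch. The paper carries out exactly these subcases (writing $s'$ for your $t$) and likewise leaves the remaining cases as routine.
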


\begin{proof}This is done by 'case by case' considerations. The most interesting case occurs when $s\sigma = s\alpha\cdot z=\alpha s'\cdot z>\sigma $, where $\alpha\in D_K, z\in F_J, s'\in K$. In this case,
\begin{center}
$T_s\cdot \lambda_K(m_\sigma^J)=\epsilon _zT_s\cdot m_\alpha^K=-\epsilon _zm_\alpha^K$.
\end{center}
On the other hand, for any $s'\in \hat{J}$, if $s'z<z$, then $s\in D_{J, \sigma}^{-}$,
\begin{center}
$\lambda_K(T_sm_\sigma^J)=\lambda_K(q_sm_{s\sigma}^J+\left ( q_s-1 \right )m_\sigma^J)=-\epsilon _zq_sm_\alpha^K+\epsilon _z(q_s-1)m_\alpha^K=-\epsilon _zm_\alpha^K.$
\end{center}
If $s'z>z$, then $s\in D_{J, \sigma}^{+}$,
\begin{center}
$\lambda_K(T_sm_\sigma^J)=\lambda_K(m_{s\sigma}^J)=-\epsilon _zm_\alpha^K$.
\end{center}
For $s'\in J$, we have $s'z>z$ and $s'z\notin F_J$, then $s\in D_{J, \sigma}^{0}$,
\begin{center}
$\lambda_K(T_sm_\sigma^J)=\lambda_K(-m_{\sigma}^J)=-\epsilon _zm_\alpha^K$.
\end{center}
This is exactly the case. Other cases are simpler than this and we omit the details.
\end{proof}

\begin{Cor} For any $\sigma\in D_J$ and $\sigma = \alpha \cdot z$ with $\alpha\in D_K, z\in F_J$,
\begin{center}
$\lambda_K(m_\sigma^J)=T_\sigma\cdot m_e^K$.
\end{center}
\end{Cor}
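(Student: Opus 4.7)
The plan is to exploit the $\mathscr{H}$-linearity of $\lambda_K$ established in Theorem 2.8, together with an auxiliary identity $T_\sigma\cdot m_e^J = m_\sigma^J$ valid for every $\sigma\in D_J$. Granting that identity, the computation
\[
\lambda_K(m_\sigma^J)\;=\;\lambda_K(T_\sigma\cdot m_e^J)\;=\;T_\sigma\cdot\lambda_K(m_e^J)\;=\;T_\sigma\cdot m_e^K
\]
finishes the proof. The middle equality is Theorem 2.8 bootstrapped from single generators $T_s$ to an arbitrary $T_\sigma$ by a routine induction on $\ell(\sigma)$, and the last equality uses the trivial factorisation $e = e\cdot e$ (with $\alpha = e\in D_K$ and $z = e\in F_J$), under which the definition of $\lambda_K$ gives $\lambda_K(m_e^J) = \epsilon_e\, m_e^K = m_e^K$.

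The only genuine content is therefore the auxiliary identity $T_\sigma\cdot m_e^J = m_\sigma^J$, which I would prove by induction on $\ell(\sigma)$. Fix a reduced expression $\sigma = s_1s_2\cdots s_k$ and, for $1\leqslant i\leqslant k+1$, let $\sigma_i = s_is_{i+1}\cdots s_k$, so that $\sigma_{k+1} = e$ and $\sigma_1 = \sigma$. The key observation is that every tail $\sigma_i$ remains in $D_J$: indeed, if some $t\in J$ satisfied $\ell(\sigma_i t)<\ell(\sigma_i)$, then writing $\sigma = (s_1\cdots s_{i-1})\sigma_i$ as a reduced product would give $\ell(\sigma t)\leqslant \ell(s_1\cdots s_{i-1})+\ell(\sigma_i t)<\ell(\sigma)$, contradicting $\sigma\in D_J$. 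Consequently $s_i\sigma_{i+1} = \sigma_i\in D_J$ with $\ell(s_i\sigma_{i+1})>\ell(\sigma_{i+1})$, i.e.\ $s_i\in D_{J,\sigma_{i+1}}^{+}$, and the defining action in $M^J$ collapses to $T_{s_i}\cdot m_{\sigma_{i+1}}^J = m_{\sigma_i}^J$. Iterating from $i=k$ down to $i=1$ yields $T_\sigma\cdot m_e^J = m_\sigma^J$, as required.

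I do not foresee any serious obstacle. The only mildly subtle point is the stability of $D_J$ under taking tails of reduced expressions, but that reduces to the one-line length argument above. Everything else is a direct application of Theorem 2.8 and the definition of $\lambda_K$; in particular, no case analysis on the factorisation $\sigma = \alpha\cdot z$ is needed, since that information is already encoded in how $\lambda_K$ was built.
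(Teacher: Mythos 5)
Your proof is correct and follows the same route as the paper: both use the identity $m_\sigma^J = T_\sigma\cdot m_e^J$ together with the $\mathscr{H}$-linearity of $\lambda_K$ from Theorem 2.8. The only difference is that you supply the (correct) inductive justification of $T_\sigma\cdot m_e^J = m_\sigma^J$ via the fact that tails of a reduced expression of $\sigma\in D_J$ stay in $D_J$, which the paper simply asserts.
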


\begin{proof} By $m_\sigma^J=T_\sigma\cdot m_e^J$, we get
\begin{center}
$\lambda_K(m_\sigma^J) =\lambda_K(T_\sigma\cdot m_e^J) =T_{\sigma}\cdot \lambda_K(m_e^J)=T_\sigma\cdot m_e^K$.
\end{center}
\end{proof}

\begin{Cor}
$\lambda_K$ commutes with the involution $\bar{\ }$ on $M^{J}$ and $M^{K}$.
\end{Cor}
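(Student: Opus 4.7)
The plan is to reduce the commutativity $\overline{\lambda_K(m)} = \lambda_K(\overline{m})$ to a check on the single basis vector $m_e^J$, then propagate the identity using the $\mathscr{H}$-linearity of $\lambda_K$ established in Theorem 2.8. Since the two bar involutions are $\mathbb{Z}[\Gamma]$-semilinear, it suffices to verify the equality on each basis element $m_\sigma^J$ with $\sigma\in D_J$.

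First I would observe that for $\sigma\in D_J$ one can choose a reduced expression $\sigma = s_1s_2\cdots s_k$ whose every suffix $s_is_{i+1}\cdots s_k$ still lies in $D_J$; iterating the rule $T_s m_\tau^J = m_{s\tau}^J$ for $s\in D^+_{J,\tau}$ then gives $m_\sigma^J = T_\sigma\cdot m_e^J$. Combined with the conventional normalization $\overline{m_e^J} = m_e^J$ of the parabolic involution (the analogue of $\overline{\Gamma_e} = \Gamma_e$ from Proposition 1.5) and the compatibility $\overline{h\cdot m} = \overline{h}\cdot\overline{m}$, this yields $\overline{m_\sigma^J} = \overline{T_\sigma}\cdot m_e^J$. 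Using Theorem 2.8 together with $\lambda_K(m_e^J) = \epsilon_e m_e^K = m_e^K$ (the factorization $e = e\cdot e$ in $D_K\cdot F_J$), I obtain
\[
\lambda_K\bigl(\overline{m_\sigma^J}\bigr) \;=\; \overline{T_\sigma}\cdot \lambda_K(m_e^J) \;=\; \overline{T_\sigma}\cdot m_e^K.
\]
Conversely, Corollary 2.9 already identifies $\lambda_K(m_\sigma^J) = T_\sigma\cdot m_e^K$, and compatibility of $\bar{\ }$ on $M^K$ with the bar involution on $\mathscr{H}$, together with $\overline{m_e^K} = m_e^K$, produces $\overline{\lambda_K(m_\sigma^J)} = \overline{T_\sigma}\cdot m_e^K$, matching the previous line.

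The only delicate point is invoking the normalizations $\overline{m_e^J} = m_e^J$ and $\overline{m_e^K} = m_e^K$; these are the standard choices for the bar involutions on the parabolic Hecke modules and are not really an obstacle. Everything else is routine bookkeeping flowing from Corollary 2.9 and the $\mathscr{H}$-linearity in Theorem 2.8, so no genuinely hard step remains.
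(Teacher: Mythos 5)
Your proof is correct and follows essentially the same route as the paper: the paper deduces the corollary directly from Corollary 2.9, i.e.\ from $\lambda_K(m_\sigma^J)=T_\sigma\cdot m_e^K$ together with $m_\sigma^J=T_\sigma\cdot m_e^J$, the $\mathscr{H}$-linearity of Theorem 2.8, and the normalizations $\overline{m_e^J}=m_e^J$, $\overline{m_e^K}=m_e^K$ — exactly the bookkeeping you carry out. No further comparison is needed.
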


\begin{proof}This is immediate from Corollary 2.9.
\end{proof}

\begin{Rem}There is a map $\widetilde{\lambda}_K$ from $\widetilde{M}^{J}$ to $\widetilde{M}^{K}$ defined by
\begin{center}
$\widetilde{\lambda}_K(\widetilde{m}_\sigma^J) =  q_z \widetilde{m}_\alpha^K$.
\end{center}
and the properties of $\widetilde{\lambda}_K$ is very similar to $\lambda_K$.
\end{Rem}

\begin{Theo}The following diagram is commutative:
\begin{center}
$\xymatrix{
M^J  \ar[rr]^{\lambda_K }\ar[d]_{\theta_J } &               & M^K \ar[d]^{\theta_K} \\
\widetilde{M}^{J} \ar[rr]^{\widetilde{\lambda}_K}   &   & \widetilde{M}^K}$
\end{center}
\end{Theo}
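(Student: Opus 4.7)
The plan is to check commutativity on each basis element $m_\sigma^J$ with $\sigma\in D_J$. By Theorem 2.8 I may write $\sigma=\alpha\cdot z$ uniquely with $\alpha\in D_K$, $z\in F_J$, and $\ell(\sigma)=\ell(\alpha)+\ell(z)$. This last equality furnishes the two arithmetic identities that will drive the whole computation: $\epsilon_\sigma=\epsilon_\alpha\epsilon_z$ and $q_\sigma=q_\alpha q_z$.

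First I compute the down-then-right composite. Applying Proposition~1.1 to $\theta_J$ and Remark~2.12 to $\widetilde\lambda_K$, and using the $\mathbb{Z}[\Gamma]$-linearity of $\widetilde\lambda_K$ together with semilinearity of the bar involution, I get
\begin{equation*}
\widetilde\lambda_K\bigl(\theta_J(m_\sigma^J)\bigr)
=\widetilde\lambda_K\bigl(\epsilon_\sigma q_\sigma\overline{\widetilde m_\sigma^J}\bigr)
=\epsilon_\sigma q_\sigma\,\overline{\widetilde\lambda_K(\widetilde m_\sigma^J)}
=\epsilon_\sigma q_\sigma\,\overline{q_z\widetilde m_\alpha^K}
=\epsilon_\sigma q_\sigma q_z^{-1}\overline{\widetilde m_\alpha^K}
=\epsilon_\sigma q_\alpha\,\overline{\widetilde m_\alpha^K},
\end{equation*}
where the last equality uses $q_\sigma=q_\alpha q_z$. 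For this to be legal I need that $\widetilde\lambda_K$ commutes with the bar involution; this is the content of Remark~2.12, whose proof mirrors Corollary~2.11 (start from $\widetilde\lambda_K(\overline{\widetilde m_e^J})=\overline{\widetilde m_e^K}$ and invoke the $\mathscr{H}$-linearity analogue of Theorem~2.9 for $\widetilde\lambda_K$).

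Next I compute the right-then-down composite directly from the definitions of $\lambda_K$ (Section~2.2) and $\theta_K$ (the obvious $K$-analogue of Proposition~1.1):
\begin{equation*}
\theta_K\bigl(\lambda_K(m_\sigma^J)\bigr)
=\theta_K(\epsilon_z m_\alpha^K)
=\epsilon_z\,\epsilon_\alpha q_\alpha\,\overline{\widetilde m_\alpha^K}
=\epsilon_\sigma q_\alpha\,\overline{\widetilde m_\alpha^K},
\end{equation*}
using $\epsilon_\sigma=\epsilon_\alpha\epsilon_z$ at the last step. Since both composites produce the same element $\epsilon_\sigma q_\alpha\,\overline{\widetilde m_\alpha^K}$ on every basis vector $m_\sigma^J$, and both are $\mathbb{Z}[\Gamma]$-linear, they agree on $M^J$, proving the diagram commutes.

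The computation is essentially mechanical; the only non-trivial ingredient is Remark~2.12 (bar-compatibility of $\widetilde\lambda_K$), which however is a routine transcription of Corollary~2.11. All other steps reduce to the length identity $\ell(\sigma)=\ell(\alpha)+\ell(z)$ from Theorem~2.8.
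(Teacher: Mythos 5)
Your proof is correct and follows essentially the same route as the paper: evaluate both composites on a basis element $m_\sigma^J$ using the factorization $\sigma=\alpha\cdot z$, and match the results via $\epsilon_\sigma=\epsilon_\alpha\epsilon_z$ and $q_\sigma=q_\alpha q_z$. You are in fact slightly more careful than the paper in flagging that the step $\widetilde\lambda_K(\overline{\widetilde m_\sigma^J})=\overline{\widetilde\lambda_K(\widetilde m_\sigma^J)}$ requires the bar-compatibility of $\widetilde\lambda_K$, which the paper uses implicitly.
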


\begin{proof}
For $\sigma =\alpha\cdot z$ with $\alpha \in D_K$ and $z\in F_J $, we get
\begin{center}
$\theta_K \circ \lambda_K \left ( m_\sigma^J \right )=\theta_K\left ( \epsilon _zm_\alpha^K\right )=\epsilon _\sigma q_\alpha\overline{\widetilde{m }_\alpha^K}$,
\end{center}
and
\begin{center}
$\widetilde{\lambda }_K\circ \theta_J\left ( m_\sigma^J \right )=\widetilde{\lambda }_K\left ( \epsilon _\sigma q_\sigma\overline{\widetilde{m}_\sigma^J}\right )=\epsilon _\sigma q_\sigma q_z^{-1}\overline{\widetilde{m }_\alpha^K}$
\end{center}
Clearly, these two expressions are equal.
\end{proof}

\subsection{The mapping from $\mathscr{H}$ to $M(E_J,L)$}

Finally, we can give the relation between $\mathscr{H}$ and $M(E_J,L)$.

\begin{Theo} For any $w\in W$, there is a unique factorization
\begin{center}
$w= \alpha \cdot z\cdot w_J$,
\end{center}
where $\alpha\in D_K, z\in F_J, w_J\in W_J$. Then we can defined a map $\nu $ from $\mathscr{H} $ to $M(E_J,L)$ by
\begin{center}
    $\nu (T_w)=
    \left\{\begin{aligned}
      &\epsilon_z\epsilon _{w_J} q_x\Gamma_y       \   &\text{if}\ &\alpha=x\cdot y\in D_K^1,\\
      &\epsilon_z \epsilon _{w_J} q_\alpha\Gamma_e  \ \ \    &\text{if}\ &\alpha\in D_K^2.
    \end{aligned}\right.$
\end{center}
such that

$\left ( 1 \right )$ $\nu$ commutes with the involution $\bar{\ }$.

$\left ( 2 \right )$ The following diagram is commutative:
\begin{center}
$\xymatrix{
\mathscr{H} \ar[rr]^{\varphi _J}\ar[d]_{\Phi }  &   & M^J  \ar[rr]^{\lambda_K}\ar[d]_{\theta_J }          &   &
 M^K \ar[rr]^{\lambda_J} \ar[d]^{\theta_K}      &   & M(E_J,L) \ar[d]^{\delta} \\
\mathscr{H} \ar[rr]^{\widetilde{\varphi} _J}    &   & \widetilde{M}^{J} \ar[rr]^{\widetilde{\lambda}_K}   &   & \widetilde{M}^{K}\ar[rr]^{\widetilde{\lambda}_J}&   & \widetilde{M}(E_J,L)}$
\end{center}
Note that if $J=K$, then $M^{J, u_s}=M^{K, u_s}$ and $\lambda _K=\widetilde{\lambda}_K=Id$.
\end{Theo}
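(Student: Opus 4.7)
The plan is to recognize $\nu$ as the composition $\nu = \lambda_J \circ \lambda_K \circ \varphi_J$ and then deduce everything from results already established. First I would justify the unique factorization $w = \alpha \cdot z \cdot w_J$ with $\ell(w) = \ell(\alpha) + \ell(z) + \ell(w_J)$: take the standard decomposition $w = \sigma \cdot w_J$ with $\sigma \in D_J$, $w_J \in W_J$ (with lengths adding), then apply Theorem 2.7(2) to split $\sigma = \alpha \cdot z$ with $\alpha \in D_K$, $z \in F_J$. Uniqueness at each stage yields uniqueness of the triple.

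Next I would verify that the composition above produces the formula in the statement. Tracing $T_w$ through the three maps gives
\begin{center}
$\varphi_J(T_w) = \epsilon_{w_J} m_\sigma^J$, \quad $\lambda_K(\epsilon_{w_J} m_\sigma^J) = \epsilon_z \epsilon_{w_J} m_\alpha^K$,
\end{center}
and then $\lambda_J(m_\alpha^K)$ is $q_x \Gamma_y$ when $\alpha = x \cdot y \in D_K^1$ or $q_\alpha \Gamma_e$ when $\alpha \in D_K^2$, exactly matching the claimed expression for $\nu(T_w)$. In particular this shows $\nu$ is well defined, independent of the reduced expression used.

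For part (1), compatibility with $\bar{\ }$ follows at once because each of the three factors commutes with the involution: $\varphi_J$ by the remark following Proposition 1.1, $\lambda_K$ by Corollary 2.10, and $\lambda_J$ by Corollary 2.4. For part (2), I would simply glue three already-known commutative squares: the leftmost square is Proposition 1.1(1), the middle square is Theorem 2.12, and the right-hand square is Theorem 2.6. Concatenating them yields the full diagram, and the degenerate case $J = K$ (where $F_J = \{e\}$ and $\lambda_K = \widetilde{\lambda}_K = \mathrm{Id}$) is immediate from the definitions.

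I do not expect a genuine obstacle here, since the theorem is essentially a bookkeeping statement assembling Subsections 2.1 and 2.2. The only point requiring care is the sign tracking in the compositional formula for $\nu(T_w)$ when $\alpha \in D_K^2$: here the factor from $\lambda_J$ is just $q_\alpha$ (with no $\epsilon_x$), so one must confirm that no spurious sign $\epsilon_\alpha$ appears when reassembling $\nu$. This is a direct check against the case distinctions in Theorem 2.6 and Remark 2.5.
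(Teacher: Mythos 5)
Your proposal is correct and follows essentially the same route as the paper: the author's own proof is a one-line citation of Proposition 1.1, Proposition 1.7, Theorem 2.6 and Theorem 2.12, i.e.\ precisely the recognition of $\nu$ as $\lambda_J \circ \lambda_K \circ \varphi_J$ and the gluing of the three already-established commutative squares. Your version merely spells out the bookkeeping (the factorization $w=\alpha\cdot z\cdot w_J$ and the trace of $T_w$ through the composition) that the paper leaves implicit.
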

\begin{proof}
This is proved by Proposition 1.1, Proposition 1.7, Theorem 2.6 and Theorem 2.12.
\end{proof}

\section{An isomorphism of $M^{J}$ onto left-ideal of $\hat{\mathscr{H}}$}

In this section, we show that the parabolic Hecke module $M^J$ is isomorphic to a left ideal of the Hecke algebra. The idea in this section is inspired by Lusztig \cite{g3} and the method of proof is very similar to that paper.

Let $\hat{\mathscr{H}}$ be the vector space consisting of all formal sums $\sum\limits_{w\in W}^{}c_wT_w$, where $c_w\in \mathbb{Z}[\Gamma ]$. We can view $\mathscr{H}$ as a subspace of $\hat{\mathscr{H}}$ in an obvious way. The $\mathscr{H}$-module structure on $\mathscr{H}$ extends to a $\mathscr{H}$-module structure on $\hat{\mathscr{H}}$. For any $z\in D_J$, we set
\begin{center}
$Q_z=\sum\limits_{y\in W_J}^{}N_y^zT_{zy}\in \hat{\mathscr{H}}$,
\end{center}
where $N_y^z=\epsilon _yq_y^{-1}$ will be proved in Theorem 3.2. Then $Q_J=\left \langle Q_z \right \rangle_{z\in D_J}$ as a $\mathscr{H}$-submodule generated by
\begin{center}
$Q_e=\sum\limits_{y\in W_J}^{}\epsilon _yq_y^{-1}T_y$.
\end{center}

\begin{Pro}
With the above notations, $Q_J$ is a left-ideal of $\hat{\mathscr{H}}$.
\end{Pro}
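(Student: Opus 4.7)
My plan is to leverage the identity $T_{s'} Q_e = -Q_e$ for every $s' \in J$; once this is in hand, the $\hat{\mathscr{H}}$-action on any $T_w Q_e$ reduces to a signed scalar times some $Q_z$, and the left ideal property follows formally.

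First I would verify $T_{s'} Q_e = -Q_e$ by direct computation. Partition $W_J = A \sqcup B$, where $A = \{y \in W_J : \ell(s'y) > \ell(y)\}$ and $B = \{y \in W_J : \ell(s'y) < \ell(y)\}$, noting that $A$ and $B$ are interchanged by the involution $y \leftrightarrow s'y$. Expanding $T_{s'} Q_e = \sum_y \epsilon_y q_y^{-1} T_{s'} T_y$ with $T_{s'} T_y = T_{s'y}$ for $y \in A$ and $T_{s'} T_y = q_{s'} T_{s'y} + (q_{s'}-1) T_y$ for $y \in B$, then reindexing the $T_{s'y}$ terms via $y \mapsto s'y$ (which flips $\epsilon_y$ and rescales $q_y^{-1}$ by $q_{s'}^{\pm 1}$), the $B$-indexed contributions cancel via the key identity $-q_{s'} + (q_{s'}-1) = -1$, yielding $T_{s'} Q_e = -Q_e$.

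By induction on $\ell(w_J)$, this extends to $T_{w_J} Q_e = \epsilon_{w_J} Q_e$ for every $w_J \in W_J$. Since $\ell(zy) = \ell(z) + \ell(y)$ whenever $z \in D_J$ and $y \in W_J$, we have $T_z T_y = T_{zy}$, hence $Q_z = T_z Q_e$. Any $w \in W$ has a unique factorization $w = z \cdot w_J$ with $z \in D_J$ and $w_J \in W_J$, whence $T_w Q_e = T_z T_{w_J} Q_e = \epsilon_{w_J} Q_z$. Consequently, for any $h = \sum_{w \in W} c_w T_w \in \hat{\mathscr{H}}$ we obtain $h Q_e = \sum_{z \in D_J} (\sum_{w_J \in W_J} c_{zw_J} \epsilon_{w_J}) Q_z$, a formal $\mathbb{Z}[\Gamma]$-combination of the $Q_z$'s and thus an element of $Q_J$. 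Combined with $Q_J = \mathscr{H} \cdot Q_e$, this gives $\hat{\mathscr{H}} \cdot Q_J = \hat{\mathscr{H}} \cdot \mathscr{H} \cdot Q_e \subseteq \hat{\mathscr{H}} \cdot Q_e \subseteq Q_J$, establishing the left ideal property.

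The hardest part is the first step, the verification of $T_{s'} Q_e = -Q_e$, which requires the careful sign and power-of-$q_{s'}$ bookkeeping sketched above; everything afterwards is a formal consequence of the factorization $W = D_J \cdot W_J$ and the induced decomposition of $T_w$. A secondary subtlety is the interpretation of $Q_J$ as closed under formal (possibly infinite) $\mathbb{Z}[\Gamma]$-sums of the $Q_z$'s, which is needed so that the expression for $h Q_e$ is indeed an element of $Q_J$ when $W_J$ is infinite.
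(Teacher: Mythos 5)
Your proof is correct, and it takes a genuinely different (and more self-contained) route than the paper's. The paper's own argument simply factors $wz=uv$ with $u\in D_J$, $v\in W_J$, writes $T_wQ_z=\sum \tilde N^z_{vy}T_{uvy}$, and declares this to lie in $Q_J$; as written this only shows the product is supported on the coset $uW_J$, and the fact that the coefficients actually reassemble into a $\mathbb{Z}[\Gamma]$-multiple of $Q_u$ is never checked there --- it is implicitly deferred to the recursions for $N_y^z$ established in Theorem 3.2. You instead isolate the real content as the eigenvector identity $T_{s'}Q_e=-Q_e$ for $s'\in J$ (i.e.\ $Q_e$ spans the sign representation of the parabolic subalgebra), verify it by the $A\sqcup B$ partition and the cancellation $-q_{s'}+(q_{s'}-1)=-1$, and then everything else ($T_{w_J}Q_e=\epsilon_{w_J}Q_e$, $Q_z=T_zQ_e$, $T_wQ_e=\epsilon_{w_J}Q_z$) follows formally from $W=D_J\cdot W_J$. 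This is essentially Lusztig's argument in the cited source \cite{g3}, and it buys a complete proof of the proposition without invoking Theorem 3.2; the paper's version is shorter but logically leans on material proved later. Your computation of the reindexing (sign flip $\epsilon_{s'y}=-\epsilon_y$ and rescaling $q_{s'y}^{-1}=q_y^{-1}q_{s'}^{\pm1}$) checks out.

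One caution: in the final step you let an arbitrary $h\in\hat{\mathscr{H}}$ act on $Q_e$. The paper only gives $\hat{\mathscr{H}}$ an $\mathscr{H}$-module structure, not an algebra structure, and for infinite $W_J$ the coefficient $\sum_{w_J\in W_J}c_{zw_J}\epsilon_{w_J}$ need not be a finite element of $\mathbb{Z}[\Gamma]$. You flag this yourself; the clean statement, and the one the paper's proof actually addresses, is closure under left multiplication by $\mathscr{H}$ (equivalently by the $T_w$, $w\in W$), for which your argument through $T_wQ_e=\epsilon_{w_J}Q_z$ is already complete. I would simply drop the $\hat{\mathscr{H}}\cdot Q_J$ clause and conclude with $\mathscr{H}\cdot Q_J=\mathscr{H}\cdot\mathscr{H}\cdot Q_e\subseteq\mathscr{H}\cdot Q_e=Q_J$.
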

\begin{proof}For any $z_1\neq z_2 \in D_J$, we have
\begin{center}
$Q_{z_1}=N_e^{z_1}T_{z_1\cdot e}+N_{s_1}^{z_1}T_{z_1s_1}+N_{s_1s_2}^{z_1}T_{z_1s_1s_2}+\cdots+N_{y}^{z_1}T_{z_1y}+\cdots$

$Q_{z_2}=N_e^{z_2}T_{z_2\cdot e}+N_{s_1}^{z_2}T_{z_2s_1}+N_{s_1s_2}^{z_2}T_{z_2s_1s_2}+\cdots+N_{y}^{z_2}T_{z_2y}+\cdots$
\end{center}
where $s_1, s_2, \cdots, s_i\in J, y\in W_J$. Thus $Q_{z_1}-Q_{z_2}\in Q_J$ is obvious since $Q_J$ is a group.

For any $w\in W$, we can give $wz$ a unique factorization $wz=uv$ such that $u\in D_J$ and $v\in W_J$, then
\begin{center}
$T_wQ_z=\sum\limits_{y\in W_J}^{}N_y^zT_wT_{xy}=\sum\limits_{vy\in W_J}^{}\tilde{N}_{vy}^zT_{uvy}\in Q_J$,
\end{center}
this is proved that $Q_J$ is a left-ideal of $\hat{\mathscr{H}}$.
\end{proof}

\begin{Theo}There exists a unique $\mathscr{H}$-linear map $\mu:M^J \rightarrow Q_J$ such that
\begin{center}
$\mu(m_e^J)=\sum\limits_{y\in W_J}^{}\epsilon _yq_y^{-1}T_y$.
\end{center}
Moreover, $\mu$ is an isomorphism of $M^J$ onto the $\mathscr{H}$-submodule of $\hat{\mathscr{H}}$ generated by $\mu(m_e^J)$.
\end{Theo}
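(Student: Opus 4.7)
My plan is to construct $\mu$ via the universal property of $M^J$ as an $\mathscr{H}$-module and then establish injectivity by exploiting the unique factorization $W = D_J\cdot W_J$. The first step is to observe that $M^J$ is cyclic, generated by $m_e^J$: for any $\sigma \in D_J$ with reduced expression $\sigma = s_1\cdots s_k$, every prefix again lies in $D_J$, so iterated application of the rule $T_s\cdot m_\tau^J = m_{s\tau}^J$ for $s\in D_{J,\tau}^{+}$ yields $T_\sigma\cdot m_e^J = m_\sigma^J$. The only further relation forced on $m_e^J$ by the $\mathscr{H}$-action is $T_s\cdot m_e^J = -m_e^J$ for $s\in J$, because then $se=s\notin D_J$ places $s$ in $D_{J,e}^{0}$ and the parameter is $u_s=-1$ for $M^J$. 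A basis count via $\{m_\sigma^J : \sigma\in D_J\}$ confirms that $M^J$ is isomorphic to $\mathscr{H}/I$, where $I$ is the left ideal of $\mathscr{H}$ generated by $\{T_s+1 : s\in J\}$.

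To construct $\mu$ it therefore suffices to verify the single relation $T_s\cdot Q_e = -Q_e$ for every $s\in J$. This is the computational heart of the argument: partition $W_J = A_s \sqcup B_s$ with $A_s=\{y : sy>y\}$ and $B_s=\{y : sy<y\}$, apply $T_sT_y = T_{sy}$ on $A_s$ and $T_sT_y = q_sT_{sy}+(q_s-1)T_y$ on $B_s$, and reindex the $T_{sy}$ terms via the involution $y\leftrightarrow sy$ (using $q_{sy}=q_sq_y$ and $\epsilon_{sy}=-\epsilon_y$ when $sy>y$). Collecting coefficients one finds that, both for $z\in A_s$ and for $z\in B_s$, the coefficient of $T_z$ in $T_s Q_e$ equals $-\epsilon_z q_z^{-1}$, giving $T_sQ_e = -Q_e$. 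The universal property then produces a unique $\mathscr{H}$-linear map $\mu : M^J\rightarrow \hat{\mathscr{H}}$ with $\mu(m_e^J)=Q_e$, and by construction its image is the $\mathscr{H}$-submodule generated by $Q_e$, which is $Q_J$ by Proposition 3.1.

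For injectivity I would compute $\mu(m_\sigma^J)=T_\sigma\cdot Q_e$ explicitly. Since $\ell(\sigma y)=\ell(\sigma)+\ell(y)$ whenever $\sigma\in D_J$ and $y\in W_J$, one has $T_\sigma T_y=T_{\sigma y}$, so
\[
\mu(m_\sigma^J)=\sum_{y\in W_J}\epsilon_y q_y^{-1}T_{\sigma y},
\]
which, as a by-product, justifies the formula $N_y^z=\epsilon_y q_y^{-1}$ used in defining $Q_z$. If $\sum_{\sigma\in D_J}c_\sigma\mu(m_\sigma^J)=0$, then by the unique factorization of $W$ as $D_J\cdot W_J$ the coefficient of $T_\sigma$ (coming solely from the $y=e$ term) is exactly $c_\sigma$, forcing $c_\sigma=\bm{0}$ for every $\sigma$. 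Hence $\mu$ is injective, and therefore an isomorphism of $M^J$ onto $Q_J$. The main obstacle is the identity $T_sQ_e=-Q_e$ for $s\in J$; once that rank-one verification is settled, existence via the universal property and injectivity via the $D_J\cdot W_J$ decomposition are both essentially formal.
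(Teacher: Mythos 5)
Your proof is correct, but it takes a genuinely different route from the paper's. The paper never invokes the presentation $M^J\cong\mathscr{H}\big/\sum_{s\in J}\mathscr{H}(T_s+1)$: instead it asks directly what constraints $\mathscr{H}$-linearity places on the whole family of coefficients $N_y^z$, obtains a system of recurrences in $(y,z)$ (one for each of the cases $s\in D_{J,z}^{-}$, $D_{J,z}^{+}$, $D_{J,z}^{0}$), then shows that the coefficients $L_y^z$ defined by $T_{zy}m_e^J=L_y^zm_z^J$ satisfy, after the rescaling $\tilde L_y^z=q_y^{-1}L_y^z$, exactly the same recurrences; existence and uniqueness follow by matching the two systems, and $N_y^z=\epsilon_yq_y^{-1}$ drops out at the end. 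You compress all of this into the single identity $T_sQ_e=-Q_e$ for $s\in J$, verified by the $A_s\sqcup B_s$ reindexing (I checked the coefficient of $T_z$ in both cases; it comes out to $-\epsilon_zq_z^{-1}$ as claimed), and let the universal property of the cyclic module do the rest. Your approach buys economy --- one relation to check instead of a two-parameter family of recurrences --- and, more importantly, a sounder injectivity argument: the paper only observes that $\mu(m_{z_1}^J)\neq\mu(m_{z_2}^J)$ for $z_1\neq z_2$, which does not by itself give injectivity of a linear map, whereas your leading-coefficient argument via the unique factorization $W=D_J\cdot W_J$ does. What you pay for this is the extra input that $M^J$ really is $\mathscr{H}/I$, which you justify only by a sketched basis count; the sketch is standard and completes without difficulty (the surjection $\mathscr{H}/I\to M^J$ carries the spanning set $\{T_\sigma+I\}_{\sigma\in D_J}$ onto a basis), so this is not a gap. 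One terminological nit: the intermediate elements arising in your computation of $T_\sigma\cdot m_e^J$ are the trailing segments $s_i\cdots s_k$, i.e.\ suffixes of $\sigma$ in the paper's left-weak-order sense rather than prefixes; the fact you actually need --- that these lie in $D_J$ --- is true.
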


\begin{proof}For any $z\in D_J$, giving a $\mathbb{Z}[\Gamma ]$-linear map $\mu:M^J \rightarrow Q_J$ by
\begin{center}
$\mu(m_z^J)=\sum\limits_{y\in W_J}^{} N_y^zT_{zy}$,
\end{center}
where $\left \{ N_y^z \mid (y,z)\in W_J\times D_J \right \}\subset \mathbb{Z}[\Gamma ]$. It is obvious that $\mu$ is surjective. For $z_1\neq z_2 \in D_J$, we have
\begin{center}
$z_1W_J\cap z_2W_J=\{e\}$ and $m_{z_1}^J\neq m_{z_2}^J\in M^J$,
\end{center}
then $\mu(m_{z_1}^J)=Q_{z_1}\neq Q_{z_2}=\mu(m_{z_2}^J)$. So the injectivity of $\mu$ is proved.

We will prove that $\mu$ is a $\mathscr{H}$-linear isomorphism as follows. For all $z\in D_J, s\in S$, the $\mathscr{H}$-linearity of $\mu$ is equivalent to the following equation:
\begin{center}
$\mu (T_sm_z^J)=T_s\cdot\mu(m_z^J)$.
\end{center}
This is equivalent to
\begin{center}
$\begin{aligned}
T_{s}Q_z=&q_sQ_{sz}+(q_s-1)Q_z            & \text{if}\ s\in D_{J, z}^{-},\\
T_{s}Q_z=&Q_{sz}                                           & \text{if}\ s\in D_{J, z}^{+},\\
T_{s}Q_z=&-Q_{z}                                           & \text{if}\ s\in D_{J, z}^{0},
\end{aligned}$
\end{center}
Since $Q_z=\sum\limits_{y\in W_J}^{}N_y^zT_{zy}=\sum\limits_{y\in W_J}^{}N_y^zT_zT_y$, we see that giving a $\mathscr{H}$-linear map $\mu$ is the same as giving a family of elements $\left \{ N_y^z \mid (y,z)\in W_J\times D_J \right \}$ in $\mathbb{Z}[\Gamma ]$ such that the following equations are satisfied for any $z\in D_J, s\in S$,
\begin{center}
$\begin{aligned}
\sum\limits_{y\in W_J}^{}N_y^zT_{s}T_zT_y=&\sum\limits_{y\in W_J}^{}(q_sN_{y}^{sz}T_{sz}+(q_s-1)N_y^zT_z)T_y  & \text{if}\ s\in D_{J, z}^{-},\\
\sum\limits_{y\in W_J}^{}N_y^zT_{s}T_zT_y=&\sum\limits_{y\in W_J}^{}N_y^{sz}T_{sz}T_y        & \text{if}\ s\in D_{J, z}^{+},\\
\sum\limits_{y\in W_J}^{}N_y^zT_{s}T_zT_y=&-\sum\limits_{y\in W_J}^{}N_y^zT_zT_y           & \text{if}\ s\in D_{J, z}^{0},
\end{aligned}$
\end{center}
Here we replace
\begin{center}
$\begin{aligned}
\sum\limits_{y\in W_J}^{}N_y^zT_{s}T_zT_y=&\sum\limits_{y\in W_J}^{}N_y^z(q_sT_{sz}+(q_s-1)T_z)T_y  & \text{if}\ &s\in D_{J, z}^{-},\\
\sum\limits_{y\in W_J}^{}N_y^zT_{s}T_zT_y=&\sum\limits_{y\in W_J}^{}N_y^{z}T_{sz}T_y        & \text{if}\ &s\in D_{J, z}^{+},
\end{aligned}$
\end{center}
Especially, if $s\in D_{J, z}^{0}$, there is a $t\in J$ such that $sz=zt$, then
\begin{center}
$\begin{aligned}
\sum\limits_{y\in W_J}^{}N_y^zT_{s}T_zT_y&=\sum\limits_{y\in W_J}^{}N_y^zT_zT_tT_y  \\
&=\sum\limits_{y\in W_J,ty>y}^{}N_y^zT_zT_{ty} +\sum\limits_{y\in W_J,ty<y}^{}N_y^zT_z(q_sT_{ty}+(q_s-1)T_y) \\
&=\sum\limits_{y\in W_J,ty<y}^{}(N_{ty}^z+(q_s-1)N_y^z)T_zT_{y} +\sum\limits_{y\in W_J,ty>y}^{}q_sN_{ty}^zT_zT_{y}
\end{aligned}$
\end{center}
We see that $\left \{ N_y^z \mid (y,z)\in W_J\times D_J \right \}$ must satisfy the following set of equations:
\begin{center}
$(1)\left\{\begin{aligned}
N_y^{z}&=N_y^{sz}               & \text{if}\ &s\in D_{J, z}^{-},         \\
N_y^{z}&=N_y^{sz}      & \text{if}\ &s\in D_{J, z}^{+},   \\
N_y^{z}&=-q_s^{-1}N_{ty}^z              & \text{if}\ &s\in D_{J, z}^{0},ty<y,\\
N_y^{z}&=-q_sN_{ty}^z              & \text{if}\ &s\in D_{J, z}^{0},ty>y.
\end{aligned}\right.$
\end{center}

On the other hand, by the definition of involution on $M^J$, we have $T_zm_e^J=m_z^J$ for any $z\in D_J$. Hence for any $x\in W$ with $x=zy, z\in D_J, y\in W_J$, we can write uniquely
\begin{center}
$T_xm_e^J=T_zT_ym_e^J=L_y^zm_z^J$,
\end{center}
where $L_y^z\in \mathbb{Z}[\Gamma ]$. For any $x\in W, s\in S$, we have
\begin{center}
$T_sT_zT_ym_e^J=L_y^zT_sm_z^J$.
\end{center}

For the case $s\in D_{J, z}^{-}$, we have
\begin{center}
$q_sL_y^{sz}m_{sz}^J+(q_s-1)L_y^zm_z^J=q_sL_y^{z}m_{sz}^J+(q_s-1)L_y^zm_z^J$.
\end{center}

For the case $s\in D_{J, z}^{+}$, we have $L_y^{sz}m_{sz}^J=L_y^{z}m_{sz}^J$.

For the case $s\in D_{J, z}^{0}$ and $ty<y$, we have
\begin{center}
$q_sL_{ty}^{z}m_{z}^J+(q_s-1)L_y^zm_z^J=-L_y^{z}m_{z}^J$.
\end{center}

For the case $s\in D_{J, z}^{0}$ and $ty>y$, we have $L_{ty}^{z}m_{z}^J=-L_y^{z}m_{z}^J$.

Summing this four cases up, we have
\begin{center}
$(2)\left\{\begin{aligned}
L_y^{z}&=L_y^{sz}               & \text{if}\ &s\in D_{J, z}^{-},         \\
L_y^{z}&=L_y^{sz}       & \text{if}\ &s\in D_{J, z}^{+},   \\
L_y^{z}&=-L_{ty}^{z}              & \text{if}\ &s\in D_{J, z}^{0},ty<y,\\
L_y^{z}&=-L_{ty}^{z}              & \text{if}\ &s\in D_{J, z}^{0},ty>y.
\end{aligned}\right.$
\end{center}
We replace $L_y^z$ with $\tilde{L}_y^z$ by $\tilde{L}_y^z=q_y^{-1}L_y^z$, then the formulas of $L_y^z$ can be rewritten as follows:
\begin{center}
$(3)\left\{\begin{aligned}
\tilde{L}_y^{z}&=\tilde{L}_y^{sz}               & \text{if}\ &s\in D_{J, z}^{-},         \\
\tilde{L}_y^{z}&=\tilde{L}_y^{sz}       & \text{if}\ &s\in D_{J, z}^{+},   \\
\tilde{L}_y^{z}&=-q_s^{-1}\tilde{L}_{ty}^{z}              & \text{if}\ &s\in D_{J, z}^{0},ty<y,\\
\tilde{L}_y^{z}&=-q_s\tilde{L}_{ty}^{z}              & \text{if}\ &s\in D_{J, z}^{0},ty>y.\\
\end{aligned}\right.$
\end{center}
Comparing with the formulas in (1), we see that the conditions of $\left \{ \tilde{L}_y^z \mid (y,z)\in W_J\times D_J \right \}$ is equivalent to the conditions of $\left \{ N_y^z \mid (y,z)\in W_J\times D_J \right \}$.

Hence there is a unique $\mathscr{H}$-linear map $\mu:M^J \rightarrow Q_J$ such that
\begin{center}
$\mu(m_z^J)=\sum\limits_{y\in W_J}^{} q_y^{-1}L_y^zT_{zy}$,
\end{center}
for any $z\in D_J$. Actually, $L_y^z=\epsilon _{y}$ is easy to find since the definition of $M^J$.
\end{proof}

\section{The relation of the corresponding $R$-polynomials}

As an application of Theorem 2.13, we give the relation between the parabolic $R$-polynomials on $D_J$ and the $R$-polynomials on $E_J$. First, we recall some definitions.

\begin{Def}[\cite{h1}, Definition 3.1]There exists a family of polynomials $\left\{R_{\sigma,\tau }^J\mid \sigma ,\tau \in D_J \right \}$ satisfying the condition
   \begin{center}
   $\overline{m_{\tau}^J}=\sum\limits_{\sigma\in D_J}^{}\epsilon _\sigma\epsilon _\tau q_\tau^{-1}R_{\sigma,\tau}^J m_{\sigma}^J$,
   \end{center}
we call these polynomials the parabolic $R$-polynomials on $D_J$.
\end{Def}

\begin{Def}[\cite{y1}, Corollary 3.2]There exists a family of polynomials $\left\{R_{x,y}\mid x,y \in E_J \right \}$ satisfying the condition
   \begin{center}
  $ \overline{\Gamma _{y}}=\sum\limits_{x\in E_J}^{}\epsilon _{x}\epsilon _{y}q_{y}^{-1}R_{x,y}\Gamma _{x},$
   \end{center}
we call these polynomials the $R$-polynomials on $E_J$.
\end{Def}

Then, we have
\begin{Theo}With the above notations, for any $x, y\in E_J$,
\begin{center}
    $R_{x,y}=
    \left\{\begin{aligned}
      &\sum\limits_{ u\in \overline{E_J}, z\in F_J }^{}\epsilon_uq_uR_{uxz,y}^J      &\text{Èô}\ x\neq e,\\
      &\sum\limits_{ u\in \overline{E_J}, z\in F_J }^{}\epsilon_uq_uR_{uz,y}^J +\sum\limits_{\alpha\in D_K^2, z\in F_J}^{}\epsilon_\alpha q_\alpha R_{\alpha z, y}^J               &\text{Èô}\ x=e.
    \end{aligned}\right.$
  \end{center}
\end{Theo}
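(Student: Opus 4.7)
The plan is to apply the composite map $\lambda_J \circ \lambda_K : M^J \to M(E_J,L)$ from the commutative diagram of Theorem 2.13 to the defining identity of the parabolic $R$-polynomials, evaluated at $\tau = y \in E_J$, and then to read off the $R_{x,y}$ by comparing coefficients of the basis elements $\Gamma_x$ on the two sides. Corollary 2.10 and Corollary 2.4 say that $\lambda_K$ and $\lambda_J$ each commute with the bar involution, so their composite does too; applied to $\overline{m_y^J}$ this produces $\overline{\Gamma_y}$ on the left, which expands by Definition 4.2, and an explicit $R^J$-weighted sum on the right.

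The first step is to record $(\lambda_J \circ \lambda_K)(m_\sigma^J)$ in closed form. By Theorem 2.7 each $\sigma \in D_J$ admits a unique factorization $\sigma = \alpha \cdot z$ with $\alpha \in D_K$ and $z \in F_J$, and by Theorem 2.1 if $\alpha \in D_K^1$ there is a further unique factorization $\alpha = u \cdot x$ with $u \in \overline{E_J}$ and $x \in E_J$ (the maximal suffix of $\alpha$ lying in $E_J$). Chaining the definitions of $\lambda_K$ and $\lambda_J$ gives $(\lambda_J \circ \lambda_K)(m_\sigma^J) = \epsilon_z q_u \Gamma_x$ when $\sigma = u \cdot x \cdot z$ with $u \cdot x \in D_K^1$, and $(\lambda_J \circ \lambda_K)(m_\sigma^J) = \epsilon_z q_\alpha \Gamma_e$ when $\alpha \in D_K^2$. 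In particular $(\lambda_J \circ \lambda_K)(m_y^J) = \Gamma_y$ for every $y \in E_J$ (take $u = z = e$).

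The next step is to apply $\lambda_J \circ \lambda_K$ to both sides of the identity $\overline{m_y^J} = \sum_{\sigma \in D_J} \epsilon_\sigma \epsilon_y q_y^{-1} R_{\sigma,y}^J \, m_\sigma^J$. The left-hand side becomes $\overline{\Gamma_y} = \sum_{x \in E_J} \epsilon_x \epsilon_y q_y^{-1} R_{x,y}\,\Gamma_x$ by Definition 4.2. On the right-hand side the sum splits into a $D_K^1$-piece and a $D_K^2$-piece according to the above factorization. Using $\epsilon_\sigma = \epsilon_u \epsilon_x \epsilon_z$ in the first case and $\epsilon_\sigma = \epsilon_\alpha \epsilon_z$ in the second, the factor $\epsilon_z$ coming out of $\lambda_K$ cancels the $\epsilon_z$ in $\epsilon_\sigma$, and after dividing out the common factor $\epsilon_x \epsilon_y q_y^{-1}$ the coefficient of each $\Gamma_x$ takes the advertised form.

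The final step is to isolate the two cases of the theorem. For $x \neq e$ only $D_K^1$-terms can contribute to $\Gamma_x$, and they are indexed exactly by the pairs $(u,z) \in \overline{E_J} \times F_J$ for which $u \cdot x \in D_K^1$ with $y_{\max}(u \cdot x) = x$, yielding the first formula. For $x = e$, both the $D_K^1$-terms whose inner $E_J$-factor equals $e$ and all the $D_K^2$-terms contribute to $\Gamma_e$, producing the two summands of the second formula. The main obstacle is not a new mathematical step but careful bookkeeping of signs and summation ranges; in particular one must verify that the inner factorization $\alpha = u \cdot x$ is truly unique so that each $\sigma$ is counted once on the right, and that the implicit convention $R^J_{\sigma,\tau} = 0$ whenever $\sigma \notin D_J$ (or when $u \cdot x \cdot z$ is not a legitimate element of $D_J$) is consistent with the written sums.
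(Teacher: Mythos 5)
Your proposal is correct and follows essentially the same route as the paper: the paper proves this statement by composing two intermediate results (Theorem 4.6, relating $R_{x,y}$ to $R^K_{\alpha,\beta}$ via $\lambda_J$, and Theorem 4.8, relating $R^K_{\alpha,\beta}$ to $R^J_{\sigma,\tau}$ via $\lambda_K$), each obtained by applying the relevant map to the bar of a basis element and comparing coefficients. You simply apply the composite $\lambda_J\circ\lambda_K$ in one step instead of factoring through the intermediate family $R^K$, which is the same argument with the bookkeeping merged.
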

\begin{proof}
This is clear from the following two subsections.
\end{proof}

\begin{Rem}The relation between $\widetilde {R}_{\sigma,\tau }^J$ and $R_{x,y}$ as follows£»
\begin{center}
    $\widetilde {R}_{x,y}=
    \left\{\begin{aligned}
      &\sum\limits_{ u\in \overline{E_J}, z\in F_J }^{}\epsilon_zq_z\widetilde {R}_{uxz,y}^J    &\text{Èô}\ x\neq e,\\
      &\sum\limits_{ u\in \overline{E_J}, z\in F_J }^{}\epsilon_zq_z\widetilde {R}_{uz,y}^J +\sum\limits_{\alpha\in D_K^2, z\in F_J}^{}\epsilon_zq_z\widetilde {R}_{\alpha z, y}^J               &\text{Èô}\ x=e.
    \end{aligned}\right.$
  \end{center}
\end{Rem}

\subsection{The relation between $R_{x,y}$ and $R_{\alpha,\beta}^K$ }

\begin{Def}There exists a unique family of polynomials $\left \{ R_{\alpha,\beta}^K\mid \alpha,\beta\in D_K \right \}$ satisfying the condition
   \begin{center}
   $\overline{m_{\beta}^K}=\sum\limits_{\alpha\in D_K}^{}\epsilon _\alpha\epsilon _\beta q_\beta^{-1}R_{\alpha,\beta}^K m_{\alpha}^K$,
   \end{center}
we call these polynomials the parabolic $R$-polynomials on $D_K$.
\end{Def}

\begin{Theo}With the above notations, for any $x, y\in E_J$,
\begin{center}
    $R_{x,y}=
    \left\{\begin{aligned}
      &\sum\limits_{ z\in \overline{E_J} }^{}\epsilon _zq_zR_{zx,y}^K                &\text{Èô}\ x\neq e,\\
      &\sum\limits_{ z\in \overline{E_J} }^{}\epsilon _zq_zR_{z,y}^K +\sum\limits_{\alpha\in D_K^2}^{}\epsilon _\alpha q_\alpha R_{\alpha,y}^K               &\text{Èô}\ x=e.
    \end{aligned}\right.$
  \end{center}
\end{Theo}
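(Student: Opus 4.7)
The plan is to apply the $\mathbb{Z}[\Gamma]$-semilinear map $\lambda_J : M^K \to M(E_J, L)$ from Section 2.1 to the defining identity
\[
\overline{m_{y}^K} \;=\; \sum_{\alpha \in D_K} \epsilon_\alpha \epsilon_y q_y^{-1}\, R_{\alpha,y}^K\, m_\alpha^K,
\]
which is valid because $E_J \subseteq D_K$, so $y \in E_J$ has a well-defined $R^K$-expansion. The key ingredient is Corollary 2.4, which states that $\lambda_J$ commutes with the bar involution on $M^K$ and $M(E_J,L)$.

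First I would evaluate the left-hand side: since $y \in E_J$, the trivial factorisation $y = e \cdot y$ (with $e \in \overline{E_J}$ and $y_{\max} = y$) gives $\lambda_J(m_y^K) = q_e \Gamma_y = \Gamma_y$, so $\lambda_J(\overline{m_y^K}) = \overline{\lambda_J(m_y^K)} = \overline{\Gamma_y}$. Next I would split the right-hand sum along the partition $D_K = D_K^1 \sqcup D_K^2$. For $\alpha \in D_K^1$ with unique decomposition $\alpha = z \cdot x$, $z \in \overline{E_J}$, $x \in E_J$, the definition of $\lambda_J$ gives $\lambda_J(m_\alpha^K) = q_z \Gamma_x$, and $\ell(\alpha) = \ell(z) + \ell(x)$ gives $\epsilon_\alpha = \epsilon_z \epsilon_x$. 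For $\alpha \in D_K^2$, $\lambda_J(m_\alpha^K) = q_\alpha \Gamma_e$. Collecting terms yields
\[
\overline{\Gamma_y} \;=\; \sum_{x \in E_J}\; \sum_{\substack{z \in \overline{E_J}\\ zx \in D_K^1}} \epsilon_z \epsilon_x \epsilon_y q_y^{-1}\, q_z R_{zx,y}^K\, \Gamma_x \;+\; \sum_{\alpha \in D_K^2} \epsilon_\alpha \epsilon_y q_y^{-1}\, q_\alpha R_{\alpha,y}^K\, \Gamma_e.
\]

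Finally, I would compare this expression, coefficient by coefficient in the basis $\{\Gamma_x\}_{x\in E_J}$, with the defining expansion $\overline{\Gamma_y} = \sum_{x \in E_J} \epsilon_x \epsilon_y q_y^{-1} R_{x,y} \Gamma_x$ from Definition 4.2. After cancelling the common factor $\epsilon_x \epsilon_y q_y^{-1}$, the coefficient of $\Gamma_x$ for $x \neq e$ produces the first case of the theorem; the coefficient of $\Gamma_e$ picks up contributions both from the pairs $(z,e)$ in $D_K^1$ (i.e.\ $\alpha = z$ with $z \in \overline{E_J}$) and from all $\alpha \in D_K^2$, yielding the second case.

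The main obstacle is purely bookkeeping: one must be careful that the reindexing $\alpha \mapsto (z,x)$ on $D_K^1$ is a bijection under the standing convention that $x = y_{\max}$, and that the exceptional case $x = e$ is singled out correctly — both because $e \in \overline{E_J}$ (so $\alpha = z$ already appears in the $D_K^1$ contribution) and because $D_K^2$ elements contribute only to $\Gamma_e$. No new identities beyond the commutativity $\lambda_J \circ \bar{\ } = \bar{\ } \circ \lambda_J$ and the explicit formula for $\lambda_J$ are required.
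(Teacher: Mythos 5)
Your proposal is correct and follows essentially the same route as the paper: apply $\lambda_J$ to the $R^K$-expansion of $\overline{m_y^K}$, use the compatibility of $\lambda_J$ with the bar involution and the identity $\lambda_J(m_y^K)=\Gamma_y$ for $y\in E_J$, split the sum over $D_K=D_K^1\sqcup D_K^2$ via the factorisation $\alpha=z\cdot x$, and compare coefficients of $\Gamma_x$ and $\Gamma_e$ with Definition 4.2. Your explicit remark that $\epsilon_\alpha=\epsilon_z\epsilon_x$ and your care with the $x=e$ case are exactly the bookkeeping the paper's computation relies on.
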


\begin{proof} By the definition of $\lambda_J$, we easy to check that $\overline{\Gamma}_{y}=\lambda_J(\overline{m_y^K})$ for any $y\in E_J$, then
 \begin{center}
      $\begin{aligned}\overline{\Gamma}_{y}
      &=\sum\limits_{\alpha\in D_K}^{}\epsilon_\alpha \epsilon_yq_y^{-1}R_{\alpha,y}^K \lambda _J(m_{\alpha}^K)\\
      &=\epsilon_\alpha \epsilon_yq_y^{-1}\left (\sum\limits_{\substack{\alpha=zx\in D_K^1,\\ z\in \overline{E_J}, x\in E_J }}^{}q_zR_{zx,y}^K \Gamma_{x}+\sum\limits_{\alpha\in D_K^2}^{}q_\alpha R_{\alpha,y}^K \Gamma_{e} \right )\\
      &=\sum\limits_{ e\neq x \in E_J }^{}\left (\sum\limits_{ z\in \overline{E_J} }^{}\epsilon_x\epsilon_y\epsilon_z q_y^{-1}q_zR_{zx,y}^K \right)\Gamma_{x}\\
      &\ \ \ \ +\left (\sum\limits_{ z\in \overline{E_J} }^{}\epsilon_y\epsilon_z q_y^{-1}q_zR_{z,y}^K +\sum\limits_{\alpha\in D_K^2}^{}\epsilon_\alpha\epsilon_y q_y^{-1}q_\alpha R_{\alpha,y}^K \right )\Gamma_{e}
      \end{aligned}$
     \end{center}
On the other hand,
\begin{center}
$\overline{\Gamma}_{y}=\sum\limits_{x\in E_J}^{}\epsilon_x\epsilon_y q_y^{-1}R_{x,y}\Gamma_{x}=\sum\limits_{e\neq x\in E_J}^{}\epsilon_x\epsilon_y q_y^{-1}R_{x,y}\Gamma_{x}+\epsilon_y q_y^{-1}R_{e,y}\Gamma_{e}$
\end{center}
Comparing the coefficients of $\Gamma_{x}$ and $\Gamma_{e}$ in the two expressions, we get the result.
\end{proof}

\begin{Rem}There is a relation between $\widetilde {R}_{x,y}$ and $\widetilde {R}_{\alpha,\beta}^K$.
\begin{center}
    $\widetilde {R}_{x,y}=
    \left\{\begin{aligned}
      &\sum\limits_{ z\in \overline{E_J} }^{}\widetilde {R}_{zx,y}^K                &\text{Èô}\ x\neq e,\\
      &\sum\limits_{ z\in \overline{E_J} }^{}\widetilde {R}_{z,y}^K +\sum\limits_{\alpha\in D_K^2}^{}\widetilde {R}_{\alpha,y}^K               &\text{Èô}\ x=e.
    \end{aligned}\right.$
  \end{center}
\end{Rem}

\subsection{The relation between $R_{\alpha, \beta}^K$ and $R_{\sigma,\tau}^J$  }

\begin{Theo}For any $\alpha, \beta\in D_K$,
\begin{center}
    $R_{\alpha,\beta}^K=\sum\limits_{z\in F_J}^{}R_{\alpha z, \beta}^J $.
  \end{center}
\end{Theo}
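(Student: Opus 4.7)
The plan is to compute $\lambda_K(\overline{m_\beta^J})$ in two different ways and compare coefficients in the basis $\{m_\alpha^K \mid \alpha \in D_K\}$ of $M^K$. Since $\beta \in D_K \subseteq D_J$, the unique factorization of Theorem 2.7 gives $\beta = \beta \cdot e$, hence $\lambda_K(m_\beta^J) = \epsilon_e m_\beta^K = m_\beta^K$. By Corollary 2.11, $\lambda_K$ commutes with the bar involution, so $\lambda_K(\overline{m_\beta^J}) = \overline{m_\beta^K}$.

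On the one hand, applying $\lambda_K$ to the defining expression of $\overline{m_\beta^J}$ in Definition 4.1 and using the factorization $\sigma = \alpha \cdot z$ with $\alpha \in D_K$, $z \in F_J$ (so that $\ell(\sigma) = \ell(\alpha) + \ell(z)$ and therefore $\epsilon_\sigma = \epsilon_\alpha \epsilon_z$), I would get
\[
\lambda_K(\overline{m_\beta^J}) = \sum_{\alpha \in D_K} \epsilon_\alpha \epsilon_\beta q_\beta^{-1} \left(\sum_{z \in F_J} R_{\alpha z, \beta}^J\right) m_\alpha^K,
\]
because each $\epsilon_z$ coming from $\lambda_K(m_\sigma^J) = \epsilon_z m_\alpha^K$ cancels the $\epsilon_z$ coming from $\epsilon_\sigma = \epsilon_\alpha \epsilon_z$. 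On the other hand, by Definition 4.5,
\[
\overline{m_\beta^K} = \sum_{\alpha \in D_K} \epsilon_\alpha \epsilon_\beta q_\beta^{-1} R_{\alpha,\beta}^K m_\alpha^K.
\]
Since $\{m_\alpha^K\}_{\alpha \in D_K}$ is a $\mathbb{Z}[\Gamma]$-basis of $M^K$, equating the two expressions term by term yields the desired identity $R_{\alpha,\beta}^K = \sum_{z \in F_J} R_{\alpha z, \beta}^J$.

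There is no real obstacle here: the argument is a direct ``push the duality through $\lambda_K$'' computation. The only subtlety worth double-checking is the sign bookkeeping, namely that the two factors $\epsilon_z$ (one from $\epsilon_\sigma = \epsilon_\alpha \epsilon_z$ and one from $\lambda_K(m_\sigma^J) = \epsilon_z m_\alpha^K$) indeed cancel, so no residual sign appears in front of $R_{\alpha z, \beta}^J$ in the final sum. Once this is verified, comparison of coefficients in the free $\mathbb{Z}[\Gamma]$-module $M^K$ completes the proof.
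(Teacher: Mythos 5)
Your proposal is correct and is essentially identical to the paper's own proof: both apply $\lambda_K$ to the defining expansion of $\overline{m_\beta^J}$, use $\lambda_K(\overline{m_\beta^J})=\overline{m_\beta^K}$ together with the factorization $\sigma=\alpha\cdot z$ and the cancellation $\epsilon_\sigma\epsilon_z=\epsilon_\alpha$, and then compare coefficients of $m_\alpha^K$. Your write-up is in fact slightly more careful than the paper's, since you explicitly justify $\lambda_K(\overline{m_\beta^J})=\overline{m_\beta^K}$ via Corollary 2.11 and the sign bookkeeping.
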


\begin{proof} By the definition of $\lambda_K$, we easy to check that $\overline{m_{\beta}^K}=\lambda_K(\overline{m_\beta^J})$ for any $\beta\in D_K$, then for $\sigma = \alpha \cdot z$ with $\alpha \in D_K$ and $ z\in F_J$,
\begin{center}
      $\begin{aligned}\overline{m_{\beta}^K}
      &=\sum\limits_{\sigma\in D_J}^{}\epsilon_\sigma\epsilon_\beta q_\beta^{-1}R_{\sigma,\beta}^J \lambda_K(m_{\sigma}^J)\\
      &=\sum\limits_{\substack{\alpha \in D_K,\\ z\in F_J}}^{}\epsilon_\alpha\epsilon_\beta q_\beta^{-1}R_{\alpha z, \beta}^J m_{\alpha}^K\\
      &=\sum\limits_{\alpha \in D_K}^{}\left(\sum\limits_{z\in F_J}^{}\epsilon_\alpha\epsilon_\beta q_\beta^{-1}R_{\alpha z, \beta}^J \right) m_{\alpha}^K
      \end{aligned}$
     \end{center}
On the other hand,
\begin{center}
$\overline{m_{\beta}^K}=\sum\limits_{\alpha\in D_K}^{}\epsilon_\alpha\epsilon_\beta q_\beta^{-1}R_{\alpha,\beta}^K m_{\alpha}^K$
\end{center}
Comparing the coefficients of $m_{\alpha}^K$ in the two expressions, we get the result.
\end{proof}

\begin{Rem}There is a relation between $\widetilde {R}_{\alpha,\beta}^K$ and $\widetilde {R}_{\sigma,\tau }^J$.
\begin{center}
    $\widetilde {R}_{\alpha,\beta}^K=\sum\limits_{z\in F_J}^{}\epsilon_zq_z\widetilde {R}_{\alpha z, \beta}^J $
  \end{center}
\end{Rem}

It is known that Theorem 4.3 is founded by Theorem 4.6 and Theorem 4.8.

\noindent\emph{\textbf{Email:} q.wang@163.sufe.edu.cn}

\noindent\emph{\textbf{Address: }School of Mathematics, Shanghai University of Finance and Economics, Guoding Road No.777, Shanghai, China, 200433.}

\end{spacing}

\begin{thebibliography}{01}
\addtolength{\itemsep}{-0.9 em} 

\bibitem{a1} A. Bjorner, F. Brenti. Combinatorics of Coxeter Groups. Springer Berlin Heidelberg, 2005.

\bibitem{d1} D. Kazhdan, G. Lusztig. Representations of Coxeter Groups and Hecke Algebras. Inventiones Mathematicae, 53(2):165-184, 1979.

\bibitem{f1} F. Brenti. A Combinatorial Formula for Kazhdan-Lusztig Polynomials. Inventiones Mathematicae, 118(1):371-394, 1994.

\bibitem{g1} G. Lusztig. Left Cells in Weyl Groups. Springer Berlin Heidelberg, 99-111, 1983.

\bibitem{g2} G. Lusztig. Hecke Algebras with Unequal Parameters. American Mathematical Society, 2003.

\bibitem{g3} G. Lusztig. An Involution Based Left Ideal in The Hecke Algebra. Representation Theory of the American Mathematical Society, 20(8):172-186, 2016.

\bibitem{h1} H. Tagawa. A Construction of Weighted Parabolic Kazhdan-Lusztig Polynomials. Journal of Algebra, 216(2):566-599, 1999.

\bibitem{r1} R.B. Howlett, V. Nguyen. W-graph Ideals I. Journal of Algebra, 361(4):188-212, 2012.

\bibitem{v1} V. Deodhar. On some Geometric Aspects of Bruhat Orderings II: The Parabolic Analogue of Kazhdan-Lusztig Polynomials. Journal of Algebra, 111(2):483-506, 1987.

\bibitem{v2} V. Deodhar. Duality in Parabolic Set up for Questions in Kazhdan-Lusztig Theory. Journal of Algebra, 142(1):201-209, 1991.

\bibitem{y1} Y. Yin. W-graphs for Hecke Algebras with Unequal Parameters. Manuscripta Mathematica, 147(1):43-62, 2015.

\bibitem{y2} Y. Yin. W-graph Ideals and Duality. Journal of Algebra, 453:377-399, 2016.

\end{thebibliography}
\end{document}